\newtheorem{theorem}{Theorem}[section]
\newtheorem{lemma}[theorem]{Lemma}
\newtheorem{corollary}[theorem]{Corollary}
\newtheorem{proposition}[theorem]{Proposition}
\theoremstyle{definition}
\newtheorem{definition}[theorem]{Definition}
\newtheorem{remark}[theorem]{Remark}
\numberwithin{equation}{section} \numberwithin{figure}{section}
\numberwithin{equation}{section}
\newtheorem*{ack}{Acknowledgments}
\author{Manuel Rivera and Mahmoud Zeinalian}
\newcommand{\Addresses}{{
  \bigskip
  \footnotesize

   \textsc{Manuel Rivera, Department of Mathematics, University of Miami, 1365 Memorial Drive, Coral
Gables, FL 33146}\par\nopagebreak
  \textit{E-mail address} \texttt{manuelr@math.miami.edu}

  \medskip
  \medskip

  \textsc{Mahmoud Zeinalian, Department of Mathematics, City University of New York, Lehman College, 250 Bedford Park Blvd W, Bronx, NY 10468
   }\par\nopagebreak
  \textit{E-mail address} \texttt{mahmoud.zeinalian@lehman.cuny.edu}

}}
\begin{document}

\begin{abstract}

We prove the following generalization of a classical result of Adams: for any pointed path connected topological space $(X,b)$, that is not necessarily simply connected, the cobar construction of the differential graded (dg) coalgebra of normalized singular chains in $X$ with vertices at $b$ is weakly equivalent as a differential graded associative algebra (dga) to the singular chains on the Moore based loop space of $X$ at $b$. We deduce this statement from several more general categorical results of independent interest. We construct a functor  $\mathfrak{C}_{\square_c}$ from simplicial sets to categories enriched over cubical sets with connections which, after triangulation of their mapping spaces, coincides with Lurie's rigidification functor $\mathfrak{C}$ from simplicial sets to simplicial categories. Taking normalized chains of the mapping spaces of $\mathfrak{C}_{\square_c}$ yields a functor $\Lambda$ from simplicial sets to dg categories which is the left adjoint to the dg nerve functor. For any simplicial set $S$ with $S_0=\{x\}$,  $\Lambda(S)(x,x)$ is a dga isomorphic to $\Omega Q_{\Delta}(S)$, the cobar construction on the dg coalgebra $Q_{\Delta}(S)$ of normalized chains on $S$. We use these facts to show that $Q_{\Delta}$ sends categorical equivalences between simplicial sets to maps of connected dg coalgebras which induce quasi-isomorphisms of dga's under the cobar functor, which is strictly stronger than saying the resulting dg coalgebras are quasi-isomorphic.
\\
\\
{\it Mathematics Subject Classification} (2010). 55U40, 57T30, 16T15.  \\
\emph{Keywords.} rigidification, cobar construction, based loop space.

\end{abstract}

\title[Cubical rigidification, cobar construction, based loop space]{Cubical rigidification, the cobar construction, and the based loop space}
\maketitle

\section{Introduction}

In order to compare two different models for $\infty$-categories, Lurie constructs in \cite{Lur09} a \textit{rigidification}, or \textit{categorification}, functor $\mathfrak{C}: Set_{\Delta} \to Cat_{\Delta}$, where $Set_{\Delta}$ denotes the category of simplicial sets and $Cat_{\Delta}$ the category of simplicial categories (categories enriched over simplicial sets). For a standard $n$-simplex $\Delta^n$ the simplicial category $\mathfrak{C}(\Delta^n)$ has the set $[n]=\{0,1,...,n\}$ as objects and for any $i,j \in [n]$ with $i \leq j$ the mapping space $\mathfrak{C}(\Delta^n)(i,j)$ is isomorphic to the simplicial cube $(\Delta^1)^{\times j-i-1}$ if $i<j$, $\Delta^0$ if $i=j$, and empty if $i>j$. In particular, $\mathfrak{C}(\Delta^n)(0,n) \cong (\Delta^1)^{\times n-1}$ for $n>0$ and we think of this simplicial $(n-1)$-cube as parametrizing a family of paths in $\Delta^n$ from $0$ to $n$. Adams described in \cite{Ada52} an algebraic construction, known as the cobar construction, that when applied to a suitable differential graded coassociative coalgebra model of a simply connected space $X$ produces a differential graded associative algebra (dga) model for the based loop space of $X$. Adams' construction is based on certain geometric maps $\theta_n: I^{n-1} \to P_{0,n}|\Delta^n|$, where $P_{0,n}|\Delta^n|$ is the space of paths in the topological $n$-simplex $|\Delta^n|$ from vertex $0$ to vertex $n$, satisfying a compatibility equation that relates the cubical boundary to the simplicial face maps and the Alexander-Whitney coproduct. The definition of $\mathfrak{C}(\Delta^n)(0,n)$ resembles the construction of Adams' maps $\theta_n$ and it suggests that behind Adams' constructions there is a space level story.
\\

In this article we describe explicitly the relationship between Lurie's functor $\mathfrak{C}$ and Adams' cobar construction. As a consequence we obtain a generalization of the main theorem of \cite{Ada52} to path connected spaces with possibly non-trivial fundamental group. To achieve this, we factor the functor $\mathfrak{C}$ through a functor $\mathfrak{C}_{\square_c}: Set_{\Delta} \to Cat_{\square_c}$ from the category of simplicial sets to the category of categories enriched over cubical sets with connections. If we apply the functor of normalized cubical chains (over a fixed commutative ring $k$) to the mapping spaces of $\mathfrak{C}_{\square_c}$  we obtain a functor $\Lambda: Set_{\Delta} \to dgCat_{k}$ from simplicial sets to dg categories satisfying the following properties. The functor $\Lambda$ is the left adjoint of the dg nerve functor described by Lurie in \cite{Lur11}. Moreover, if $S$ is a $0$-reduced simplicial set, i.e. $S_0=\{x \}$, then $\Lambda(S)(x,x)$ is a dga isomorphic to $\Omega Q_{\Delta}(S)$, the cobar construction on the dg coalgebra $Q_{\Delta}(S)$ of normalized simplicial chains with Alexander-Whitney coproduct.
\\

From the properties of $\mathfrak{C}_{\square_c}$ described in the above paragraph we deduce that $\Lambda(S)(x,x)$ and $Q_{\Delta}(\mathfrak{C}(S)(x,x))$ are weakly equivalent as dga's, where $Q_{\Delta}(\mathfrak{C}(S)(x,x))$ is considered as a dga obtained by taking normalized simplicial chains on the simplicial monoid $\mathfrak{C}(S)(x,x)$. In fact, $Q_{\Delta}(\mathfrak{C}(S)(x,x))$ is a dg bialgebra (with Alexander-Whitney coproduct) but we are not concerned with the dg coalgebra structure in this article. From these results, it follows that if $f: S \to S'$ is a map between $0$-reduced simplicial sets such that $\mathfrak{C}(f): \mathfrak{C}(S) \to \mathfrak{C}(S')$ is a weak equivalence of simplicial categories (these maps are called \textit{categorical equivalences}) then $Q_{\Delta}(f): Q_{\Delta} (S) \to Q_{\Delta}(S')$ is a map of connected dg coalgebras which induces a quasi-isomorphism of dga's after applying the cobar functor. Maps $f: C \to C'$ between connected dg coalgebras which induce a quasi-isomorphism of dg algebras $\Omega f: \Omega C \to \Omega C'$ after applying the cobar functor $\Omega$ are called $\Omega$-quasi-isomorphisms. 
\\

We apply the preceding discussion to the $0$-reduced simplicial set $\text{Sing}(X,b)$ of singular simplices on a path conencted space $X$ with vertices at a fixed point $b$. From the relationships between $\mathfrak{C}$ and $\mathfrak{C}_{\square}$,  between $\mathfrak{C}_{\square}$ and the cobar functor $\Omega$, and from some basic homotopy theoretic properties of $\mathfrak{C}$, we deduce that $\Omega Q_{\Delta}(\text{Sing}(X,b))$ is weakly equivalent as a dga to the singular chains on $\Omega^M_b X$, the topological monoid of Moore loops in $X$ based at $b$. In \cite{Ada52} Adams obtained a similar statement for a simply connected space $X$ using different methods. Our statement does not assume $X$ is simply connected and therefore extends Adams' classical result. The key homotopy theoretic property of $\mathfrak{C}$ that implies our result is the following space level statement which lies at the heart of Section 2.2 of \cite{Lur09}: for any path connected pointed space $(X,b)$ there is a weak homotopy equivalence of simplicial monoids between $\mathfrak{C}(\text{Sing}(X,b) )(b,b)$ and $\text{Sing}(\Omega^M_bX)$. 
\\

We believe this extension of Adams' result has not been observed in the literature  mainly because of the historical development of the cobar construction. We highlight two situations in which the simply connected hypothesis comes into play:
\\

1) In \cite{Ada52}, Adams constructs a map of dg algebras from the cobar construction of the dg coalgebras of chains to the cubical singular chains on the based loop space. Comparison of spectral sequences was the main technique  used at the time to measure how far a chain map is from being a quasi-isomorphism. In Adams' setup, the hypotheses in Zeeman's spectral sequence comparison theorem hold if the underlying space is simply connected and fail in general for spaces with non-trivial fundamental group.
\\

2) The cobar construction is not invariant under quasi-isomorphisms of dg coalgebras. Namely, there are quasi-isomorphisms of dg coalgebras $f: C \to C'$ for which $\Omega(f): \Omega C \to \Omega C'$ is not a quasi-isomorphism of dg algebras. An explicit example is described in Proposition 2.4.3 of \cite{LoVa12}. However, the cobar construction is invariant under quasi-isomorphisms of simply connected dg coalgebras, i.e. dg coalgebras $C$ for which $C_0 \cong k $ and $C_1=0$, as shown in Proposition 2.2.7 of \cite{LoVa12}. Hence, Adams' main statement regarding the relationship between the cobar construction and the based loop space also holds if we replace singular chains on a simply connected space $X$ with the quasi-isomorphic dg coalgebra of simplicial chains associated to any simplicial set $S$ with no non-degenerate $1$-simplices whose geometric realization is weakly homotopy equivalent to $X$. This generalization of this statement to spaces with non-trivial fundamental group fails.
\\

In the non-simply connected case we go around the use of spectral sequences as described in 1) by turning the problem of showing that two dga's are quasi-isomorphic into the more fundamental problem of showing that the two simplicial monoids  $\mathfrak{C}(\text{Sing}(X,b) )(b,b)$ and $\text{Sing}(\Omega^M_bX)$  are weakly homotopy equivalent. Then by looking closely at the combinatorics we realize that the simplicial chain complex on $\mathfrak{C}(\text{Sing}(X,b) )(b,b)$ is weakly equivalent as a dga to the cobar construction $\Omega Q_{\Delta}( \text{Sing}(X,b))$.  We go around 2) by using the following observation: if $Set^0_{\Delta}$ denotes the category of simplicial sets with a single vertex, then the functor $Q_{\Delta}^K: Set^0_{\Delta} \to dgCoalg_k$ defined by $Q_{\Delta}^K( S) = Q_{\Delta}(\text{Sing}(|S|,x) )$ sends weak homotopy equivalences of simplicial sets to $\Omega$-quasi-isomorphisms of dg coalgebras. 
Notice that, in general, for any $S \in Set^0_{\Delta}$ the connected dg coalgebra of simplicial chains $Q_{\Delta}(S)$ is quasi-isomorphic to $Q_{\Delta}^K(S)$ but not $\Omega$-quasi-isomorphic. Hence, in order to preserve all the homological information of the based loop space, the chains functor should be always precomposed with a Kan replacement functor and the notion of weak equivalences of dg coalgebras should be taken to be $\Omega$-quasi-isomorphisms. \\

We now say a few words regarding how the combinatorics in the construction of $\mathfrak{C}$ is unraveled and how its cubical version $\mathfrak{C}_{\square_c}$ is constructed. For any simplicial set $S$, Dugger and Spivak computed in \cite{DS11} the mapping spaces $\mathfrak{C}(S)(x,y)$ in terms of necklaces. A necklace is a simplicial set of the form $T=\Delta^{n_1} \vee ... \vee \Delta^{n_k}$ where in the wedge the final vertex of $\Delta^{n_i}$ has been glued to the initial vertex of $\Delta^{n_{i+1}}$; a necklace in $S$  from $x$ to $y$ is a map of simplicial sets $f: T \to S$, where $T$ is a necklace, and $f$ sends the first vertex of $T$ to $x$ and the last vertex of $T$ to $y$. For any necklace $T$ one may associate functorially a simplicial cube $C(T)$ and one of the main results in \cite{DS11} is that $\mathfrak{C}(S)(x,y)$ is isomorphic to the colimit of the simplicial sets $C(T)$ over necklaces $T$ in $S$ from $x$ to $y$. It is tempting to replace the simplicial cubes $C(T)$ with standard cubical sets of the same dimension to obtain a cubical version of $\mathfrak{C}$. However, there are certain maps between necklaces that are not realized by maps of cubical sets. For example the codegeneracy map $s^1: \Delta^3 \to \Delta^2$ which collapses the edge $[1,2]$ in $\Delta^3$ yields a map between simplicial cubes $C(s^1): C(\Delta^3) \to C(\Delta^2)$ which does not correspond to a codegeneracy map between standard cubical sets. Nonetheless, $C(s^1)$ corresponds to a \textit{co-connection} morphism, whose definition is recalled in section 2. Cubical sets with connections were introduced in \cite{BH81} and can be thought of as cubical sets with extra degeneracies. In section 3 we describe explicitly the morphisms in the category of necklaces and then in section 4 we explain how cubical sets with connections arise naturally from necklaces. We use the results in sections 3 and 4 and the description of $\mathfrak{C}(S)(x,y)$ in terms of necklaces to define $\mathfrak{C}_{\square_c}$ in section 5. In section 6 we show that $\mathfrak{C}_{\square_c}$ gives rise to the functor $\Lambda$ which is the left adjoint of the dg nerve functor described by Lurie in \cite{Lur11}. Finally, in section 7 we explain how $\Lambda$ relates to the cobar construction and how to obtain an algebraic model for the based loop space of a path connected space. We also explain how our results yield a model for the free loop space of a path connected space $X$. Namely, we deduce that the coHochschild complex of the dg coalgebra $Q_{\Delta}(\text{Sing}(X,b))$ is quasi-isomorphic to the singular chains on the free loop space of $X$. 

Over a year since the results of this paper were posted on the Arxiv, two other preprints (\cite{KaVo18} and \cite{LG18}) discussing a cubical factorization of $\mathfrak{C}$ also appeared. In \cite{KaVo18}, the authors use a cubical version of $\mathfrak{C}$ to describe a cubical approach to Lurie's theory of straightening and unstraightening. In \cite{LG18}, the author discusses some homotopy theoretic properties of the category of categories enriched over cubical sets with connections using the framework of model categories. 

\begin{ack} We would like to thank Micah Miller who was a part of the early stages of this project. We would also like to thank Thomas Nikolaus, Ronnie Brown, Thomas Tradler, and Gabriel Drummond-Cole for conversations and their comments. The first author acknowledges support by the ERC via the grant StG-259118-STEIN and the excellent working conditions at \textit{Institut de Math\'ematiques de Jussieu-Paris Rive Gauche} (IMJ-PRG). The second author was partially supported by the NSF grant DMS-1309099 and would like to thank the \textit{Max Planck Institute for Mathematics} for their support and hospitality during his visits. 

\end{ack}

\section{Preliminaries}
Denote by $Set$ the category of sets. For any small category $\mathcal{C}$ denote by $Set_{\mathcal{C}}$ the category of presheaves on $\mathcal{C}$ with values in $Set$, so the objects of $Set_{\mathcal{C}}$ are functors $\mathcal{C}^{op} \to Set$ and morphisms are natural transformations between them. For example, if $\Delta$ is the category of non-empty finite ordinals with order preserving maps then $Set_{\Delta}$ is the category of \textit{simplicial sets}. We denote by $\Delta^n$ the \textit{standard n-simplex}, so $\Delta^n$ is obtained by applying the Yoneda emedding to $[n]$, namely $\Delta^n: [m] \mapsto \text{Hom}_{\Delta}( [m],[n])$. Recall that morphisms in the category $\Delta$ are generated by functions of two types: cofaces $d_i: [n] \to [n+1]$, $0 \leq i \leq n+1$, and codegeneracies $s_j: [n] \to [n-1]$, $0 \leq j \leq n-1$. The Yoneda embedding yields simplicial set morphisms between standard simplices $Y(d_i): \Delta^{n} \to \Delta^{n+1}$ and $Y(s_j): \Delta^{n} \to \Delta^{n-1}$ which we call \textit{coface} and \textit{codegeneracy (simplicial) morphisms}. We say a simplicial set $S$ is $0$-reduced if the set $S_0$ is a singleton and we denote by $Set_{\Delta}^0$ be the full subcategory of the category $Set_{\Delta}$ of simplicial sets whose objects are $0$-reduced simplicial sets.
\\

For any positive integer $n$, let $\mathbf{1}^n$ be the $n$-fold cartesian product of copies of the category $\mathbf{1}=\{0,1\}$ which has two objects and one non-identity morphism. Denote by $\mathbf{1}^0$ the category with one object and one morphism. We will consider presheaves over the category $\square_c$ which is defined as follows. The objects of $\square_c$ are the categories $\mathbf{1}^n$ for $n=0,1,2,...$. The morphisms in $\square_c$ are generated by functors of the following three kinds:
\\
\textit{cubical co-face functors} $\delta^{\epsilon}_{j,n}: \mathbf{1}^n \to \mathbf{1}^{n+1}$, where $j=0,1,...,n+1$, and $\epsilon \in \{0,1\}$, defined by
\begin{eqnarray*}
\delta^{\epsilon}_{j,n}(s_1,...,s_n)=(s_1,...,s_{j-1},\epsilon,s_j,...,s_n),
\end{eqnarray*}
\textit{cubical co-degeneracy functors} $\varepsilon_{j,n}: \mathbf{1}^n \to \mathbf{1}^{n-1}$, where $j=1,...,n$, defined by
\begin{eqnarray*}
\varepsilon_{j,n}(s_1,...,s_n)=(s_1,...,s_{j-1},s_{j+1},...,s_n), \text{ and }
\end{eqnarray*}
\textit{cubical co-connection functors} $\gamma_{j,n}: \mathbf{1}^n \to \mathbf{1}^{n-1}$, where $j=1,...,n-1$, $n\geq 2$, defined by
\begin{eqnarray*}
\gamma_{j,n}(s_1,...,s_n)=(s_1,...,s_{j-1},\text{max}(s_j,s_{j+1}),s_{j+2},...,s_n).
\end{eqnarray*}
\\

Objects in the category $Set_{\square_c}$ are called \textit{cubical sets with connections} and were introduced by Brown and Higgins in \cite{BH81}. For any cubical set with connections $K$ we have a collection of sets $\{K_n:= K(\mathbf{1}^n)\}_{n \in \mathbb{Z}_{\geq 0}}$ together with \textit{cubical face maps} $\partial_{j,n}^{\epsilon}:=K(\delta^{\epsilon}_{j,n}): K_{n+1} \to K_n$, \textit{cubical degeneracy maps} $E_{j,n}:=K(\varepsilon_{j,n}): K_{n-1} \to K_n$, and \textit{connections}  $\Gamma_{j,n}:=K(\gamma_{j,n}): K_{n-1} \to K_n$. For simplicity we often drop the second index in this notation and, for example, write $\partial_j$ instead of $\partial_{j,n}$. Elements of $K_n$ are called $n$-cells. The structure maps satisfy certain identities described in \cite{BH81}. The \textit{standard n-cube with connections} $\square_c^n$ is the presheaf on $\square_c$ represented by $\mathbf{1}^n$, namely, $\text{Hom}_{\square_c}( \text{ \_ }, \mathbf{1}^n): \square_c^{op} \to Set$. 
\\

For a fixed commutative unital ring $k$ denote by $Ch_k$ the category of non-negatively graded chain complexes over $k$. The tensor product over $k$ defines on $Ch_k$ a symmetric monoidal structure. We have normalized chains functors  $Q_{\Delta}: Set_{\Delta} \to Ch_k$ and $Q_{\square_c}: Set_{\square_c} \to Ch_k$. The definition of $Q_{\Delta}$ is standard; we recall the definition of $Q_{\square_c}$ following \cite{Ant02}. First let $C_*K$ be the chain complex such that $C_nK$ is the free $k$-module generated by elements of $K_n$ with differential $\partial: K_n \to K_{n-1}$ defined on $\sigma \in K_n$ by $\partial (\sigma):= \sum_{j=1}^n(-1)^{j}(\partial^1_{j,n-1}(\sigma) - \partial^0_{j,n-1}(\sigma))$. Let $D_nK$ be the submodule of $C_nK$ which is generated by those cells in $K_n$ which are the image of a degeneracy or of a connection map $K_{n-1} \to K_n$. The graded module $D_*K$ forms a subcomplex of $C_*K$. Define $Q_{\square_c}(K)$ to be the quotient chain complex $C_*K/D_*K$. 
\\

The functor $Q_{\Delta}: Set_{\Delta} \to Ch_k$ lifts to a functor $Q_{\Delta}: Set_{\Delta} \to dgCoalg_k$, where $dgCoalg_k$ is the category of dg coalgebras over $k$, via the Alexander-Whitney construction as recalled in Section 7. There is a slight abuse of notation throughout the article: depending on the context $Q_{\Delta}(S)$ may be considered as a chain complex or as a dg coalgebra. For example, by $\Omega Q_{\Delta}(S)$ we mean the cobar construction of $Q_{\Delta}(S)$ considered as a dg coalgebra.
\\

The category $Set_{\Delta}$ has a symmetric monoidal structure given by the cartesian product of simplicial sets.  We will use the following (non-symmetric) monoidal structure on $Set_{\square_c}$: for cubical sets with connections $K$ and $K'$ define
\begin{eqnarray*}
K \otimes K' := \underset{\sigma: \square_c^n \to K, \tau: \square_c^m \to K'} {\text{colim}} \square^{n+m}_c.
\end{eqnarray*}

Using the above monoidal structures we may define $Cat_{\Delta}$ the category of small categories enriched over simplicial sets; these are called \textit{simplicial categories}. Similarly denote by $Cat_{\square_c}$ the category of small categories enriched over cubical sets with connections; these are called \textit{cubical categories with connections}. We will also consider the category $dgCat_k$ of small categories enriched over chain complexes over $k$; these are called \textit{dg categories}.
\\

The symbol $\cong$ will always denote isomorphism and $\simeq$ will mean weakly equivalent (in the derived sense) whenever there is a notion of weak equivalence in the underlying category. Namely, we write $A \simeq B$ if there is a zig-zag of weak equivalences between $A$ and $B$.

\section{The category of necklaces}
We follow \cite{DS11} for the next definitions and notation. A \textit{necklace} $T$ is a simplicial set of the form $T=\Delta^{n_1} \vee ... \vee \Delta^{n_k}$ where $n_i \geq 0$ and in the wedge the final vertex of $\Delta^{n_i}$ has been glued to the initial vertex of $\Delta^{n_{i+1}}$. Each $\Delta^{n_i}$ is called a \textit{bead} of $T$. Since the beads of $T$ are ordered and the vertices of each bead $\Delta^{n_i}$ are ordered as well, there is a canonical ordering on the set $V_T$ of vertices of any necklace $T$. We denote by $\alpha_T$ and $\omega_T$ the first and last vertices of the necklace $T$. A morphism $f: T \to T'$ of necklaces is a map of simplicial sets which preserves the first and last vertices. We say a necklace $\Delta^{n_1} \vee ... \vee \Delta^{n_k}$ is of \textit{preferred form} if  $k=0$ or each $n_i \geq 1$. Let $T= \Delta^{n_1} \vee ... \vee \Delta^{n_k}$  be a necklace in preferred form. Denote by $b_T$ the number of beads in $T$. A \textit{joint} of $T$ is either an initial or a final vertex in some bead. Given a necklace $T$ write $J_T$ for the subset of $V_T$ consisting of all the joints of $T$. For any two vertices $a,b \in V_T$ we write $V_T(a,b)$ and $J_T(a,b)$ for the set of vertices and joints between $a$ and $b$ inclusive. Note that there is a unique subnecklace $T(a,b) \subseteq T$ with joints $J_T(a,b)$ and vertices $V_T(a,b)$. Denote by $Nec$ the category whose objects are necklaces in preferred form and morphisms are morphisms of necklaces. Note that $Nec$ is a full subcategory of $Set^{*,*}_{\Delta}=\partial \Delta^1 \downarrow Set_{\Delta}$. 

\begin{proposition} Any non-identity morphism in $Nec$ is a composition of morphisms of the following type
\begin{item}
\item (i)  $f: T \to T'$ is an injective morphism of necklaces and $ |V_{T'}-J_{T'}|-|V_T-J_T| =1$
\\
\item (ii) $f: \Delta^{n_1} \vee ... \vee \Delta^{n_k} \to \Delta^{m_1} \vee ... \vee \Delta^{m_k}$ is a morphism of necklaces of the form $f=f_1 \vee ... \vee f_k$ such that for exactly one $p$, $f_p: \Delta^{n_p} \to \Delta^{m_p}$ is a codegeneracy morphism (so $m_p=n_p-1$) and for all $i \neq p$, $f_i: \Delta^{n_i}  \to \Delta^{m_i}$ is the identity map of standard simplices (so $n_i=m_i$ for $i \neq p$)
\\
\item (iii) $f: \Delta^{n_1} \vee ...\vee \Delta^{n_{p-1}} \vee \Delta^1 \vee \Delta^{n_{p+1}} \vee... \vee  \Delta^{n_k} \to \Delta^{n_1} \vee ...\vee \Delta^{n_{p-1}} \vee \Delta^{n_{p+1}} \vee... \vee  \Delta^{n_k}$ is a morphism of necklaces such that $f$ collapses the $p$-th bead $\Delta^1$  in the domain to the last vertex of the $(p-1)$-th bead in the target and the restriction of $f$ to all the other beads is injective. 
\end{item}
\end{proposition}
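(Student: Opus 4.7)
The plan is to establish an epi-mono factorization in $Nec$ and then decompose each factor separately. Given $f: T \to T'$ in $Nec$ with $T = \Delta^{n_1} \vee \cdots \vee \Delta^{n_k}$ and $T' = \Delta^{m_1} \vee \cdots \vee \Delta^{m_\ell}$, I would first observe that since every non-degenerate simplex of the necklace $T'$ lives inside a single bead, every simplicial map $\Delta^{n_i} \to T'$ factors through a unique bead, producing bead-restrictions $f_i: \Delta^{n_i} \to \Delta^{m_{\pi(i)}}$ for a weakly monotone assignment $\pi$. Factoring each $f_i$ in $\Delta$ via the standard epi-mono decomposition $f_i = \iota_i \circ \sigma_i$ with $\sigma_i : \Delta^{n_i} \twoheadrightarrow \Delta^{q_i}$ and $\iota_i : \Delta^{q_i} \hookrightarrow \Delta^{m_{\pi(i)}}$, and assembling the wedges while suppressing any $q_i = 0$ beads to remain in preferred form, yields an intermediate necklace $T''$ together with a factorization $f = \iota \circ \sigma$ in $Nec$, where $\sigma : T \to T''$ is surjective bead-by-bead and $\iota : T'' \to T'$ is injective.

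For the surjective half $\sigma$, each $\sigma_i$ with $q_i \geq 1$ is a composition of elementary codegeneracies in $\Delta$, each of which lifts to a morphism of type (ii) applied in bead $i$. If some $q_i = 0$, i.e., bead $i$ of $T$ is collapsed to a point, I realize the collapse as $n_i - 1$ morphisms of type (ii) (reducing $\Delta^{n_i}$ down to $\Delta^1$ through intermediate necklaces still in preferred form) followed by a single morphism of type (iii) that eliminates the resulting $\Delta^1$ bead. Performing these reductions one bead at a time exhibits $\sigma$ as a composition of (ii)'s and (iii)'s.

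The injective half $\iota : T'' \to T'$ is handled by induction on the nonnegative integer $d(\iota) := (|V_{T'}| - |J_{T'}|) - (|V_{T''}| - |J_{T''}|)$, whose nonnegativity should be verified as a preliminary combinatorial lemma using the fact that an injective necklace morphism must embed each bead as a face. The base case $d(\iota) = 0$ should force $\iota$ to be the identity, since injectivity on each bead combined with endpoint preservation leaves no room for additional non-joint vertices in $T'$. For the inductive step, I would locate a non-joint vertex $v$ of $T'$ which is \emph{removable}, in the sense that replacing the bead $\Delta^{m_j}$ containing $v$ with the face $\Delta^{m_j - 1}$ omitting $v$ produces a necklace $T'''$ in preferred form through which $\iota$ factors as $T'' \to T''' \to T'$, with $T''' \to T'$ a morphism of type (i).

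The principal obstacle is this last step: showing that a removable non-joint always exists when $d(\iota) \geq 1$. Concretely, the candidates for $v$ are either a non-joint of $T'$ lying outside the image $\iota(V_{T''})$, or a non-joint of $T'$ that happens to be the image of a joint of $T''$, and the positivity of $d(\iota)$ must be used to guarantee at least one such candidate exists. The bookkeeping required to verify that the resulting $T'''$ is in preferred form and that $T'' \to T'''$ remains a well-defined injective necklace morphism is the combinatorial heart of the argument; it will require a careful case analysis on the position of $v$ relative to the bead assignment induced by $\iota$ and to the joint structure of $T''$.
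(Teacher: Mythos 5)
Your strategy---a global epi--mono factorization $f=\iota\circ\sigma$ in $Nec$, with the bead-wise surjection $\sigma$ decomposed into elementary moves of types (ii) and (iii) and the injective part $\iota$ handled by a separate induction---is viable and is organized differently from the paper's proof. The paper instead inducts on the number of beads of $T$: it splits off the first bead, writes $f = t\circ(g\vee h)$ with $t: T_f\to T'$ injective and $g,h$ restrictions of $f$ handled by the inductive hypothesis, and reduces everything to the one-bead case (where the epi--mono factorization in $\Delta$ appears) together with the claim that every injective necklace morphism is a composition of type (i) morphisms. Your treatment of the surjective half is complete and correct (one elementary codegeneracy at a time, shrinking a collapsing bead to $\Delta^1$ before applying a type (iii) move, so that all intermediate necklaces stay in preferred form). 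However, for the injective half your proposal stops exactly where you say it does: the existence of a removable non-joint and the verification that $\iota$ factors through $T'''$ are announced as requiring ``a careful case analysis'' but are not carried out. As written this is a plan rather than a proof for the step that carries all the combinatorial content.

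The gap does close, and the two candidates you list are exactly the two cases needed; they correspond to the two flavours of type (i) morphisms (the ones denoted $\delta^0$ and $\delta^1$ in Proposition 4.2). Since $\iota$ is injective and $J_{T'}\subseteq\iota(J_{T''})$, non-joints of $T''$ map injectively to non-joints of $T'$; hence $d(\iota)\geq 1$ forces the existence of a non-joint $v$ of $T'$ that is either (a) not in $\iota(V_{T''})$ at all, or (b) equal to $\iota(w)$ for some joint $w$ of $T''$. In case (a) take $T'''\subseteq T'$ to be the subnecklace obtained by replacing the bead containing $v$ with its face omitting $v$; the image of $\iota$ misses $v$, so $\iota$ factors through $T'''$, and $T'''\to T'$ is of type (i). In case (b) take $T'''$ to have the same vertices as $T'$ but with $v$ promoted to a joint (split the bead of $T'$ containing $v$ at $v$); then $J_{T'''}=J_{T'}\cup\{v\}\subseteq\iota(J_{T''})$, and since no joint of $T''$ lies strictly inside a bead of $T''$, each bead of $T''$ still lands inside a single bead of $T'''$, so $\iota$ factors and $T'''\to T'$ is again of type (i). In both cases $d$ drops by one and the induction proceeds. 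Note that the paper performs the mirror-image induction, building up from the source: it factors $t=i\circ j$ with $j:R\to S$ elementary, where $S$ is obtained from the image of $R$ either by adjoining the smallest vertex of $V_{R'}-t(V_R)$ (when $J_{R'}=t(J_R)$) or by deleting one element of $t(J_R)-J_{R'}$ from the joint set (when $J_{R'}\subsetneq t(J_R)$). Either direction works; you should simply include the two-case argument above rather than deferring it.
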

\begin{proof}

We prove that any non-identity morphism of necklaces $f: T\to T'$ is a composition of morphisms of type (i), (ii), and (iii) by induction on $b_T$, the number of beads of $T$. If $b_T=1$, then we must have $b_{T'}=1$ as well, so $f$ is a morphism of simplicial sets between standard simplices which preserves first and last vertices. It follows that $f$ is a composition of (simplicial) coface and codegeneracy morphisms. Cofaces and codegeneracies between standard simplices are morphisms of necklaces of type (i) and of type (ii) or (iii), respectively. Assume we have shown the proposition for $b_T \leq k$ and suppose $b_T=k+1$. Let $V_T=\{x_0,...,x_p\}$ be the vertices of $T$ and $x_i \preceq x_{i+1}$. Let $x_{j_0}$ be the last vertex of the first bead of $T$, so $T=T(x_0, x_{j_0})\vee T(x_{j_0}, x_p)$ where $T(x_0, x_{j_0})$ has one bead and $T(x_{j_0}, x_p)$ has $k$ beads. Let $T_f=T'(f(x_0),f(x_{j_0})) \vee T'(f(x_{j_0}),f(x_p))$. We have an injective morphism of necklaces $t: T_f \to T'$ (notice that it is possible for $T_f \not =T'$ since $f(x_{j_0})$ might not be a joint of $T'$). It follows that $f=t \circ (g\vee h)$ where $g: T(x_0,x_{j_0}) \to T'(f(x_0), f(x_{j_0}))$ and $h : T(x_{j_0}, x_p) \to T'(f(x_{j_0}), f(x_p))$ are the morphisms of necklaces induced by restricting $f$ to $T(x_0, x_{j_0})$ and $T(x_{j_0}, x_p)$ respectively. By the induction hypothesis each of $g$ and $h$ is a composition of morphisms of type (i), (ii), and (iii) and this implies that $g \vee h$ is a composition of such morphisms as well. In fact, we have 
\begin{eqnarray*}
g \vee h=  (id_{T'(f(x_0),f(x_{j_0}))} \vee h) \circ (g \vee id_{T(x_{j_0},x_p)})
\end{eqnarray*}
and, clearly, the wedge of an identity morphism and a morphism which is a composition of morphisms of type (i), (ii), and (iii) is again a morphism of such form. 

To conclude the proof we show that $t: T_f \to T'$ is of the desired form. More generally, let us prove that any non-identity injective morphism of necklaces $t: R \to R'$ is a composition of morphisms of type (i) by induction on the integer $l(R,R'):=|V_{R'}-J_{R'}|-|V_{R}-J_{R}|$. If $l(R,R')=1$ then $t$ is of type (i). Assume we have shown the claim for $l(R,R')=k$. Suppose $t: R \to R'$ is injective and $l(R,R')=k+1$, then we have two cases: either (a) $J_{R'}=t(J_R)$ or (b) $J_{R'} \subset t(J_{R})$. In case (a), it follows that both $R$ and $R'$ have the same number of beads, thus  $t=  i \circ j$  for inclusions of necklaces $j: R \to S$, $i: S \to R'$ where $S$ is the subnecklace of $R'$ spanned by $t(V_R) \cup \{v\}$ and $v$ is the smallest element of $V_{R'}-t(V_R)$. Then $j$ is of type (i) and $i$ is a composition of morphisms of type (i) by the induction hypothesis.  For case (b), let $ t(J_R) - J_{R'}=\{t(x_{i_1}), ..., t(x_{i_n})\}$ and consider the unique subnecklace $S$ of $R'$ defined by $V_S=t(V_R)$ and $J_S=t(J_R)-\{t(x_{i_1})\}$. Then we have $t=i \circ j$ for inclusions of necklaces $j: R \to S$, $i: S \to R'$ with $j$ of type (i) and $i$ a composition of type (i) morphisms by the induction hypothesis. 
\end{proof}

\begin{remark}
Let us consider type (i) morphisms of the form $f: T \to \Delta^p$ for some integer $p\geq 1$.  If $b_T=1$ then we have an injective map of simplicial sets $f: \Delta^{p-1} \to \Delta^{p}$ which sends the first (resp. last) vertex of $\Delta^{p-1}$ to the first (resp. last) vertex of $\Delta^{p}$. The morphism $f$ determines a $(p-1)$-simplex of the simplicial set $\Delta^p$, i.e. an element of $(\Delta^p)_{p-1}$. There are $p+1$ non-degenerate elements in $(\Delta^p)_{p-1}$, however only $p-1$ of these can correspond to $f$ based on the constraint that $f$ must preserve first and last vertices, namely, all the faces of the unique non-degenerate element in $(\Delta^p)_p$ except the first and last. If $b_T>1$ then there is a joint $v \in J_T$ such that $f(v) \not \in J_{T'}$. Moreover, since $f$ is injective and $ |V_{T'}-J_{T'}|-|V_T-J_T| =1$, we have $f (J_T-\{v\} ) =J_T'$ and $f(V_T)=V_{T'}$.  It follows that $b_T=2$ and the image of $f$ is a subnecklace $T'_1 \vee T'_2$ of $\Delta^p$ starting and ending with the first and last vertices of $\Delta^p$, respectively, and containing all the vertices of $\Delta^p$. Hence, we have $T'_1 \vee T'_2=\Delta^{p-i} \vee \Delta^{i}$ for some $0<i<p$ and each of these subnecklaces of $\Delta^p$ corresponds to a unique term in the formula for the Alexander-Whitney diagonal $Q_{\Delta}(\Delta^p) \to Q_{\Delta}(\Delta^p) \otimes Q_{\Delta}(\Delta^p)$ applied to the generator represented by the unique non-degenerate $p$-simplex in $(\Delta^p)_p$.
\end{remark}

\section{The functor $C_{\square_c}: Nec \to Set_{\square_c}$}
There is a functor $C_{\square_c}: Nec \to Set_{\square_c}$ which associates functorially to any $\Delta^{n_1} \vee... \vee \Delta^{n_k} \in Nec$ a standard cube with connections of dimension $n_1+...+n_k-k$. The goal of this section is to define this functor carefully in a way which will be useful later. We start by defining a functor $P: Nec \to Cat$ where $Cat$ is the category of small categories. Given a necklace $T$ and two vertices $a,b \in V_T$ we may define a small category $P_T(a,b)$ whose objects are subsets $X \subseteq V_T(a,b)$ such that $J_T(a,b) \subseteq X$ and morphisms are inclusions of sets. For any necklace $T \in Nec$ let $P(T) = P_T(\alpha, \omega)$ where $\alpha, \omega \in V_T$ are the first and last vertices of $T$.  Let $f: T \to T'$ be a morphism in $Nec$, so $f$ is a map of simplicial sets such that $f(\alpha)=\alpha'$ and $f(\omega)=\omega'$ where $\alpha, \omega \in V_T$ and $\alpha', \omega' \in V_{T'}$ are the first and last vertices of $T$ and $T'$, respectively. Notice that we have an inclusion $J_{T'} \subseteq f(J_T)$. Thus $f$ induces a functor $P_f: P_T(\alpha, \omega) \to P_{T'}(\alpha', \omega')$ defined on objects by $P_f(X)=f(X)$ and on morphisms by the induced inclusion of sets. This yields a functor $P: Nec \to Cat$. We might think of the objects of $P(T)$ as strings of $0$'s and $1$'s as discussed below. This interpretation will yield a functor $P_{\mathbf{1}}$ which is naturally isomorphic to $P$. We define a total order on the vertices of a necklace by setting $a \preceq b$ if there is a directed path from $a$ to $b$. 

\begin{proposition} For any necklace $T$ and any  $a,b\in V_T$ such that $a \preceq b$, there is an isomorphism of categories $\phi_T: P_T(a,b) \cong \mathbf{1}^{N}$ where $N=|V_T(a,b) - J_T(a,b)|$. 
\end{proposition}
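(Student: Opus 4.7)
The plan is to exhibit an explicit isomorphism of posets, since both categories in question are posets (at most one morphism between any two objects), reducing the problem to a bijection of underlying sets that preserves and reflects the order relation.

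First, I would use the canonical total order $\preceq$ on $V_T$ to list the non-joint vertices of $V_T(a,b)$ as $v_1 \prec v_2 \prec \cdots \prec v_N$, where $N = |V_T(a,b) - J_T(a,b)|$. On objects, define
\begin{equation*}
\phi_T(X) = (s_1,\dots,s_N), \qquad s_i = \begin{cases} 1 & \text{if } v_i \in X, \\ 0 & \text{if } v_i \notin X. \end{cases}
\end{equation*}
Since every object $X$ of $P_T(a,b)$ is, by definition, a subset of $V_T(a,b)$ containing $J_T(a,b)$, it is completely determined by its intersection with $V_T(a,b) - J_T(a,b)$; conversely, for any tuple $(s_1,\dots,s_N) \in \{0,1\}^N$ the set $X = J_T(a,b) \cup \{v_i : s_i = 1\}$ is an object of $P_T(a,b)$. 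This gives a bijection on objects with inverse $(s_1,\dots,s_N) \mapsto J_T(a,b) \cup \{v_i : s_i = 1\}$.

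Next I would check the morphism structure. The morphisms of $P_T(a,b)$ are set-theoretic inclusions, and the morphisms of $\mathbf{1}^N$ are exactly the componentwise-$\leq$ relations on tuples in $\{0,1\}^N$. A containment $X \subseteq X'$ between objects of $P_T(a,b)$ holds if and only if $v_i \in X$ implies $v_i \in X'$ for each $i$, which is precisely the condition $\phi_T(X) \leq \phi_T(X')$ componentwise. Thus $\phi_T$ and its inverse carry the unique morphism between comparable objects to the unique morphism between their images, and functoriality (preservation of composition and identities) is automatic in posetal categories.

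There is no serious obstacle here; the only thing to be careful about is the bookkeeping for the naturality-type statement that will matter later: once one fixes the convention that non-joint vertices are enumerated in the $\preceq$-order of $V_T$, the isomorphism $\phi_T$ is canonical, and this canonicity is what will allow a morphism of necklaces $f: T \to T'$ to induce, through $\phi_{T'} \circ P_f \circ \phi_T^{-1}$, a functor $\mathbf{1}^N \to \mathbf{1}^{N'}$ of the cubical form anticipated in the following section. I would mention this observation at the end of the proof to foreshadow the construction of $C_{\square_c}$.
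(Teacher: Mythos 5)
Your proof is correct and follows essentially the same route as the paper: enumerate the non-joint vertices of $V_T(a,b)$ in the canonical order and send $X$ to its indicator tuple, noting that objects of $P_T(a,b)$ are determined by their non-joint vertices and that inclusions correspond to componentwise order. The paper states the functoriality check more tersely ("it is clear"), but the content is the same.
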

\begin{proof}
Let $V_T(a,b)-J_T(a,b)=\{y_1,...,y_N\}$ and $y_i \preceq y_{i+1}$ for $i=1,...,{N-1}$. Given any object $X$ of $P_T(a,b)$ (so $J_T(a,b) \subseteq X \subseteq V_T(a,b)$) we define $\phi_T(X):=(\phi_T^1(X),...,\phi_T^{N}(X))$ to be the object in the category $\mathbf{1}^N$ where, for $1 \leq i \leq N$,  we have $\phi_T^i(X)=1$ if $y_i \in X$ and $\phi_T^i(X)=0$ if $y_i \not\in X$. Given a morphism $f: X \to Y$ in $P_T(a,b)$ (so $f$ is an inclusion of sets) we have an induced morphism $\phi_T(f): \phi_T(X) \to \phi_T(Y)$ defined by $\phi_T(f):= (\phi^1_T(f), ...,\phi^N_T(f))$ where, for $1 \leq i\leq N$, $\phi^i_T(f): \phi^i_T(X) \to \phi^i_T(Y)$ is the unique non-identity morphism in $\mathbf{1}$ if $\phi^i_T(X)=0$ and $\phi^i_T(Y)=1$, and $\phi^i_T(f)$ is an identity morphism otherwise.  It is clear that the functor $\phi_T: P_T(a,b) \to \mathbf{1}^N$ is an isomorphism of categories.
\end{proof}

Consider the functor $P_{\mathbf{1}}: Nec \to Cat$ defined on objects by $P_{\mathbf{1}}(T)=\mathbf{1}^{|V_T-J_T|}$ and on morphisms $f: T\to T'$ by $P_{\mathbf{1}}(f)= \phi_{T'} \circ P(f) \circ \phi_T^{-1}: \mathbf{1}^{|V_T-J_T|} \to \mathbf{1}^{|V_{T'}-J_{T'}|}$. The above proposition implies that $P_{\mathbf{1}}$ is naturally isomorphic to $P$. In the following proposition we describe explicitly the functor $P_{\mathbf{1}}(f)$ for morphisms $f: T \to T'$ of type (i), (ii), and (iii) as in Proposition 3.1.

\begin{proposition} Let $f: T \to T'$ be a morphism in $Nec$ and let $N=|V_T-J_T|$.
\begin{enumerate}
\item If $f$ is of type (i) then $P_{\mathbf{1}}(f): \mathbf{1}^N \to \mathbf{1}^{N+1}$ is a cubical co-face functor. 
\item If $f$ is of type (ii) then $P_{\mathbf{1}}(f): \mathbf{1}^N \to \mathbf{1}^{N-1}$ is either a cubical co-connection functor or a cubical co-degeneracy functor.
\item If $f$ is of type (iii) then $P_{\mathbf{1}}(f): \mathbf{1}^N \to \mathbf{1}^N$ is the identity functor. 
\end{enumerate}
\end{proposition}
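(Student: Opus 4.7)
My plan is to verify the three cases by unwinding the isomorphism $\phi_T$ of Proposition 4.1 and the definition $P(f)(X) = f(X)$. For an object $X$ of $P_T(\alpha,\omega)$, the tuple $\phi_T(X) \in \mathbf{1}^{|V_T - J_T|}$ records, coordinate by coordinate, whether each non-joint $y_i$ of $T$ lies in $X$; the image $P_{\mathbf{1}}(f)(\phi_T(X)) = \phi_{T'}(f(X))$ therefore records whether each non-joint $z_i$ of $T'$ lies in $f(X)$. Since joints of $T$ always lie in $X$ and $J_{T'} \subseteq f(J_T)$, the only information to track is how non-joints of $T$ map into $T'$.

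I would handle type (i) using the two subcases of Remark 3.2. When $b_T = 1$, the map $f$ is a simplicial inner coface $d_j : \Delta^{p-1} \to \Delta^p$ missing an intermediate vertex $z_j$ of $T' = \Delta^p$; this vertex is a non-joint of $T'$ never hit by $f$, so its coordinate is uniformly $0$ and the remaining non-joints are matched bijectively, producing $\delta^0_{j,N}$. When $b_T = 2$, the unique joint $v$ of $T$ with $f(v) \notin J_{T'}$ is sent to a non-joint of $T'$ that nevertheless always lies in $f(X)$ (since $v$ is a joint, hence always in $X$), so the corresponding coordinate is uniformly $1$, producing $\delta^1_{j,N}$ for the appropriate $j$.

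For type (ii), only the bead $\Delta^{n_p}$ is modified, and by the preferred-form constraint we must have $n_p \geq 2$. If the codegeneracy $s_j$ in that bead has $j = 0$ or $j = n_p - 1$, it identifies an endpoint joint of the bead with the adjacent non-joint; the non-joint's image is a joint of $T'$, its coordinate is forgotten, and the other non-joint coordinates are permuted bijectively, giving a co-degeneracy $\varepsilon_{l,N}$ for an appropriate global index $l$. If $0 < j < n_p - 1$, the codegeneracy identifies two consecutive non-joints $y_l, y_{l+1}$ of $T$ which both map to the same non-joint of $T'$; that image non-joint lies in $f(X)$ iff at least one of $y_l, y_{l+1}$ lies in $X$, which is exactly $\max(s_l, s_{l+1})$, producing the co-connection $\gamma_{l,N}$.

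Type (iii) is immediate: the collapsed bead $\Delta^1$ contains no non-joint vertices, and the restriction of $f$ to the remaining beads is an isomorphism onto its image preserving the joint/non-joint dichotomy, so $f$ induces an order-preserving bijection $V_T - J_T \to V_{T'} - J_{T'}$ and $P_{\mathbf{1}}(f)$ is the identity functor. The main obstacle is not conceptual but notational: one must translate the local index $j$ inside bead $p$ into the global index $l \in \{1,\ldots,N\}$ by summing the non-joint counts of the preceding beads; once the shorthand $N_q = n_1 + \cdots + n_q - q$ is fixed, setting $l = N_{p-1} + j$ (with the appropriate offset when the codegeneracy hits an endpoint) reduces the verification to direct inspection of $\phi_T$ and $\phi_{T'}$.
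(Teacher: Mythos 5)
Your overall strategy is the same as the paper's: unwind $\phi_T$ and track, coordinate by coordinate, whether each non-joint of $T'$ lands in $f(X)$. Your treatments of types (ii) and (iii) match the paper's proof essentially verbatim: your dichotomy ``endpoint codegeneracy versus inner codegeneracy'' in the $p$-th bead is exactly the paper's dichotomy $v' \in J_{T'}$ versus $v' \in V_{T'}-J_{T'}$ for the collapsed vertex, the $\max(s_l,s_{l+1})$ computation for the co-connection is correct, and the observation that a collapsed $\Delta^1$ bead contributes no non-joints settles type (iii).

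There is, however, a gap in your type (i) case. You organize it by the two subcases of Remark 3.2, i.e.\ $b_T=1$ versus $b_T=2$ with $T'=\Delta^p$. But Remark 3.2 only treats type (i) morphisms whose \emph{target} is a single simplex; a general type (i) morphism $f: T \to T'$ is just an injective necklace map with $|V_{T'}-J_{T'}|-|V_T-J_T|=1$, and both source and target may have many beads. For such maps your dichotomy neither exhausts the cases nor predicts the right coface. For instance, let $f: \Delta^2 \vee \Delta^1 \to \Delta^2 \vee \Delta^2$ be the identity on the first bead and send the second bead to the edge joining the first and last vertices of the second target bead: here $b_T=2$, yet no joint of $T$ is sent to a non-joint of $T'$; instead the middle vertex of the second target bead is missed, so the answer is $\delta^0$, not the $\delta^1$ your rule assigns to $b_T=2$. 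The correct dichotomy --- which the paper uses, and which your two mechanisms implicitly encode --- is $J_{T'}=f(J_T)$ versus $J_{T'}\subsetneq f(J_T)$: in the first case injectivity and the counting condition force exactly one non-joint of $T'$ to be missed, whose coordinate is uniformly $0$ (giving $\delta^0_{j,N}$), and in the second exactly one joint of $T$ lands on a non-joint of $T'$, whose coordinate is uniformly $1$ because joints lie in every object $X$ (giving $\delta^1_{j,N}$). With the cases re-indexed this way, your argument goes through unchanged.
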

\begin{proof} For any morphism of necklaces $f: T \to T'$ we have $J_{T'} \subseteq f(J_T)$. For $f: T\to T'$ of type (i) we prove below that if $J_{T'} \subset f(J_T)$  then $P_{\mathbf{1}}(T)(f)$ is a cubical co-face functor $\delta^1_{j,N}$ and if $J_{T'}=f(J_T)$ then $P_{\mathbf{1}}(T)(f)$ is a cubical co-face functor $\delta^0_{j,N}$. A morphism $f: T \to T'$ of type (ii) collapses two vertices $v$ and $w$ of $T$ into a vertex $v'$ of $T'$ and is injective on $V_T-\{v,w\}$. We prove below that if $v'\not \in J_{T'}$ then $P_{\mathbf{1}}(T)(f)$ is a cubical co-connection functor $\gamma_{j,N}$ and if $v' \in J_{T'}$ then $P_{\mathbf{1}}(T)(f)$  is a cubical co-degeneracy functor $\varepsilon_{j,N}$. The proof for the third part of the proposition will be straightforward.
\begin{enumerate}
\item Let $f : T \to T'$ be of type (i) and write $\{y'_1,...,y'_{N+1}\}= V_{T'}-J_{T'}$ where $y'_i \preceq y'_{i+1}$. We have $J_{T'} \subseteq f(J_T)$ since $f$ is a morphism of necklaces. If $J_{T'} \subset f(J_T)$ then there is $v \in J_T$ such that $f(v)=y'_j\in V_{T'}-J_{T'}$ for some $j \in \{1,...,N+1\}$ and $f(J_T-\{v\}) \subseteq J_T'$. Then for any object $X$ in $P(T)$, $v \in J_T \subseteq X$ so $y_j=f(v) \in f(X)$. Using the fact that $f$ is injective and identifying objects $X$ in $P(T)$ with sequences of $0$'s and $1$'s via the isomorphism $\phi_T: P(T) \cong \mathbf{1}^N$ we see that $P_{\mathbf{1}}(f): \mathbf{1}^N \to \mathbf{1}^{N+1}$ is given on objects by 
\begin{eqnarray*}
 P_{\mathbf{1}}(f)(s_1,...,s_N)=(s_1,....,s_{j-1},1,s_j,...,s_N)
\end{eqnarray*} 
and on morphisms $\lambda= (\lambda_1,...,\lambda_N): (s_1,...,s_N) \to (s'_1,...,s'_N)$ by
\begin{eqnarray*}
P_{\mathbf{1}}(f)(\lambda)=(\lambda_1,...,\lambda_{j-1},id_1,\lambda_j,...,\lambda_N).
\end{eqnarray*}
Thus $P_{\mathbf{1}}(f)$ is the cubical co-face functor $\delta^1_{j,N}$.
\\
\\
If $J_{T'}=f(J_T)$ then there exists exactly one $j \in \{1,...,N+1\}$ such that $f^{-1}(y'_j)=\emptyset$. Then for any object $X$ in $P(T)$, $y'_j$ will never be an element of $f(X)$. Using the fact that $f$ is injective and identifying objects $X$ in $P(T)$ with sequences of $0$'s and $1$'s via the isomorphism $\phi_T: P(T) \cong \mathbf{1}^N$  we see that $P_{\mathbf{1}}(f): \mathbf{1}^N \to \mathbf{1}^{N+1}$ is given on objects by
\begin{eqnarray*}
 P_{\mathbf{1}}(f)(s_1,...,s_N)=(s_1,....,s_{j-1},0,s_j,...,s_N)
\end{eqnarray*} 
and on morphisms $\lambda= (\lambda_1,...,\lambda_N): (s_1,...,s_N) \to (s'_1,...,s'_N)$ by
\begin{eqnarray*}
P_{\mathbf{1}}(f)(\lambda)=(\lambda_1,...,\lambda_{j-1},id_0,\lambda_j,...,\lambda_N).
\end{eqnarray*}
It follows that $P_{\mathbf{1}}(f)$ is the cubical co-face functor $\delta^0_{j,N}$.

\item Let $f : T \to T'$ be of type (ii) and write $\{y_1,...,y_N\}=V_T-J_T$ where $y_i \preceq y_{i+1}$ and $\{y'_1,...,y'_{N-1}\}= V_{T'}-J_{T'}$ where $y'_i \preceq y'_{i+1}$. There exists $v' \in V_{T'}$  such that $f^{-1}(v')=\{v,w\}$ for some $v,w \in V_T$ and $|f^{-1}(x')|=1$ for all $x' \in V_{T'}-\{v'\}$. Note that $v$ and $w$ are consecutive vertices in the $p$-th bead of $T$. We have two cases: either $v' \in V_{T'}-J_{T'}$ or $v' \in J_{T'}$.
\\
\\
If $v' \in V_{T'}-J_{T'}$, then $v,w \in V_T-J_T$ so we may write $v=y_j$ and $w=y_{j+1}$ for some $j \in \{1,...,N-1\}$. Hence, for any object $X$ of $P(T)$ we have that if $X \cap \{y_j,y_{j+1}\} \neq \emptyset$ then $v' \in f(X)$ and if $X \cap \{y_j,y_{j+1}\} = \emptyset$ then $v' \not \in f(X)$. By identifying objects $X$ in $P(T)$ with sequences of $0$'s and $1$'s via the isomorphism $\phi_T: P(T) \cong \mathbf{1}^N$ we see that $P_{\mathbf{1}}(f): \mathbf{1}^N \to \mathbf{1}^{N-1}$ is given on objects by
\begin{eqnarray*}
P_{\mathbf{1}}(f)(s_1,...,s_N)=(s_1,....s_{j-1},max(s_j,s_{j+1}),s_{j+2},...,s_N)
\end{eqnarray*} 
and on morphisms $\lambda= (\lambda_1,...,\lambda_N): (s_1,...,s_N) \to (s'_1,...,s'_N)$ by
\begin{eqnarray*}
P_{\mathbf{1}}(f)(\lambda)=(\lambda_1,...,\lambda_{j-1},\sigma_{j,j+1}, \lambda_{j+2},...,\lambda_N),
\end{eqnarray*}
where $\sigma_{j,j+1}$ is the unique morphism $max(s_j,s_{j+1})\to max(s'_j,s'_{j+1})$ in the category $\mathbf{1}$. It follows that $P_{\mathbf{1}}(f)$ is the cubical co-connection functor $\gamma_{j,N}$. 
\\
\\
If $v' \in J_{T'}$, we may assume without loss of generality that $w\in J_T$ and $v=y_j \in V_T-J_T$  for some $j \in \{1,...,N\}$. Let $X$ be any object of $P(T)$. Every element of $X-\{y_j\}$ corresponds to a unique element in $f(X)$ via $P(f)$ (since $f$ is of type (ii)) and if $y_j \in X$ then $P(f)$ sends $y_j$ to the joint $v' \in f(X)$. By identifying objects $X$ in $P(T)$ with sequences of $0$'s and $1$'s via the isomorphism $\phi: P(T) \cong \mathbf{1}^N$ we see that $P_{\mathbf{1}}(f): \mathbf{1}^N \to \mathbf{1}^{N-1}$ is given on objects by
\begin{eqnarray*}
P_{\mathbf{1}}(f)(s_1,...,s_N) = (s_1,...,s_{j-1},s_{j+1},...,s_N)
\end{eqnarray*}
and on morphisms $\lambda= (\lambda_1,...,\lambda_N): (s_1,...,s_N) \to (s'_1,...,s'_N)$ by
\begin{eqnarray*}
P_{\mathbf{1}}(f)(\lambda)=(\lambda_1,...,\lambda_{i-1},\lambda_{i+1},...,\lambda_N).
\end{eqnarray*}
It follows that $P_{\mathbf{1}}(f)$ is the cubical co-degeneracy functor $\varepsilon_{j,N}$. 
\item If $f$ is of type (iii) then $|V_T|=|V_{T'}|+1$ and the injectivity of $f$ only fails when it collapses two joints (the endpoints of the $p$-th bead $\Delta^1$) to a joint in $T'$. Under the isomorphism $\phi_T: P(T) \cong \mathbf{1}^N$ this collapse does not have any effect since given an object  $X$ of $P(T)$ the entries in the string $\phi_T(X)$ of $0$'s and $1$'s only indicate which non-joint vertices of $T$ are in $X$. It follows that $P_{\mathbf{1}}(f): \mathbf{1}^N \to \mathbf{1}^N$ is the identity functor. 
\end{enumerate}
\end{proof}
\begin{remark}
Consider two morphisms of necklaces $f: U \to T$ and $g:V\to T$. If $f$ and $g$ are both of type (i) and $f \neq g$ then $P_{\mathbf{1}}(f) \neq P_{\mathbf{1}}(g)$. If $f$ and $g$ are of both of type (ii) and $f \neq g$ we may have $P_{\mathbf{1}}(f)=P_{\mathbf{1}}(g)$. For example, let $U=W \vee \Delta^{m+1} \vee \Delta^n \vee W'$, $V=W \vee \Delta^m \vee \Delta^{n+1}\vee W'$, $T=W\vee \Delta^m \vee \Delta^n \vee W'$, for any two necklaces $W$ and $W'$. Consider the maps $f=id_W \vee s_{m+1} \vee id_{\Delta^n} \vee id_{W'} $ and $g= id_W \vee id_{\Delta^m} \vee s_1 \vee id_{W'}$, where $s_{m+1}: \Delta^{m+1} \to \Delta^{m}$ and $s_1: \Delta^{n+1} \to \Delta^n$ are the last and first (simplicial) codegeneracy morphisms respectively. It follows that $P_{\mathbf{1}}(f)=P_{\mathbf{1}}(g)$. The identification of these two morphisms after applying $P_{\mathbf{1}}$ should be compared with the identification in the colimit defining the monoidal structure of the category of cubical sets with connections discussed in the next section. Finally, if $f$ and $g$ are of type (iii), then we always have $P_{\mathbf{1}}(f)=P_{\mathbf{1}}(g)$. 
\end{remark}

\begin{corollary} The functor $P_{\mathbf{1}}: Nec \to Cat$ factors as a composition $Nec \to \square_c \hookrightarrow Cat$.
\end{corollary}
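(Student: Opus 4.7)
The plan is essentially to read off the factorization from Propositions 3.1 and 4.2. On objects, there is nothing to check: by definition $P_{\mathbf{1}}(T) = \mathbf{1}^{|V_T - J_T|}$, which is precisely an object of $\square_c$. So the whole content of the corollary is a statement about morphisms, namely that every morphism in the image of $P_{\mathbf{1}}$ is a composition of cubical co-face, co-degeneracy, and co-connection functors (together with identities).

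First, I would invoke Proposition 3.1 to decompose any non-identity morphism $f : T \to T'$ in $Nec$ as a finite composition $f = f_r \circ \cdots \circ f_1$ where each $f_i$ is of type (i), (ii), or (iii). Applying $P_{\mathbf{1}}$ and using its functoriality, I get $P_{\mathbf{1}}(f) = P_{\mathbf{1}}(f_r) \circ \cdots \circ P_{\mathbf{1}}(f_1)$ in $Cat$. Proposition 4.2 identifies each factor: type (i) yields one of the cubical co-face functors $\delta^{\epsilon}_{j,N}$, type (ii) yields either a cubical co-connection functor $\gamma_{j,N}$ or a cubical co-degeneracy functor $\varepsilon_{j,N}$, and type (iii) yields an identity. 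Each of these is by definition a morphism of $\square_c$, so the composite is a morphism of $\square_c$ as well. Thus the functor $P_{\mathbf{1}}$ lands, both on objects and on morphisms, in the subcategory $\square_c \subset Cat$, giving the desired factorization.

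The one thing worth pausing on is well-definedness: since the decomposition furnished by Proposition 3.1 is not unique, one might worry that two different decompositions yield different composites of cubical generators. However, this is automatic and not really an obstacle: $P_{\mathbf{1}}$ is already established as a bona fide functor to $Cat$, so its value on $f$ does not depend on a choice of decomposition; the above argument merely exhibits $P_{\mathbf{1}}(f)$ as lying in the subcategory $\square_c$. Relatedly, the inclusion $\square_c \hookrightarrow Cat$ is the tautological one sending $\mathbf{1}^n$ and the generating cubical functors to themselves as actual functors in $Cat$; the relations imposed in $\square_c$ are exactly the relations these functors satisfy in $Cat$, so this inclusion is faithful and there is no ambiguity in speaking of the factorization. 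In summary, the corollary is an immediate assembly of the two preceding propositions, and no genuinely new verification is required.
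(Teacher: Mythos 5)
Your proposal is correct and follows the same route as the paper: objects are handled by definition, and morphisms by combining the decomposition of Proposition 3.1 with the identification of the generators in Proposition 4.2 (the paper's proof is terser but relies on exactly this). Your extra remark on well-definedness is a reasonable sanity check, though, as you note, it is automatic since $P_{\mathbf{1}}$ is already a functor to $Cat$.
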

\begin{proof} For any object $T$ in $Nec$, $P_{\mathbf{1}}(T)= \mathbf{1}^N$ is an object of $\square_c$ and, by Proposition 4.2, for any morphism $f$ in $Nec$, $P_{\mathbf{1}}(f)$ is a morphism in $\square_c$.
\end{proof}
Hence, we may consider $P_{\mathbf{1}}$ as a functor from $Nec$ to $\square_c$. Finally, we define a functor from the category of necklaces to the category of cubical sets as follows.
\begin{definition}
Define the functor $C_{\square_c}: Nec \to  Set_{\square_c}$ to be the composition of functors $C_{\square_c}:=Y \circ P_{\mathbf{1}}$ where $Y: \square_c \to \text{Hom}_{Cat}((\square_c)^{op},Set)=Set_{\square_c}$ is the Yoneda embedding.
\end{definition}
Note that for any $T$ in $Nec$, $C_{\square_c}(T)$ is the standard cube with connections $\square^N_c$ where $N=|V_T-J_T|$. 
\begin{remark}
All non-degenerate cells of $C_{\square_c}(T)$ can be realized by injective maps of necklaces $T' \to T$. More precisely, for every non-degenerate cell $\sigma \in C_{\square_c}(T)_n$ there is a necklace $T_{\sigma}$, with $|V_{T_{\sigma}}- J_{T_{\sigma}}|=n$ together with an injective map of necklaces $\iota_{\sigma}: T_{\sigma} \to T$ such that the induced map of cubical sets with connections
\begin{eqnarray*}
\square^n_c \cong C_{\square_c}(T_{\sigma}) \xrightarrow{C_{\square_c}(\iota_{\sigma})} C_{\square_c}(T)
\end{eqnarray*}
corresponds to the cell $\sigma$. Notice $T_{\sigma}$ is not unique, since any other $T'_{\sigma}$ for which there is a map $T'_{\sigma} \to T_{\sigma}$ of type (iii) also works.
\end{remark}

\section{The cubical rigidification functor $\mathfrak{C}_{\square_c}: Set_{\Delta} \to Cat_{\square_c}$}
The goal of this section is to show that the functor $\mathfrak{C}: Set_{\Delta} \to Cat_{\Delta}$ defined by Lurie factors naturally through categories enriched over cubical sets with connections via a functor  $\mathfrak{C}_{\square_c}: Set_{\Delta} \to Cat_{\square_c}$. More precisely, we construct functors $\mathfrak{C}_{\square_c}: Set_{\Delta} \to Cat_{\square_c}$ and $\mathfrak{T}: Cat_{\square_c} \to Cat_{\Delta}$ such that $\mathfrak{T} \circ \mathfrak{C}_{\square_c}$ is naturally isomorphic to $\mathfrak{C}$.
\begin{definition}
For any simplicial set $S$ we define a category $\mathfrak{C}_{\square_c}(S)$ enriched over cubical sets with connections.  Define the objects of  $\mathfrak{C}_{\square_c}(S)$ to be the vertices of $S$, i.e. the elements of $S_0$.  For any $x,y \in S_0$ define
\begin{eqnarray*}
 \mathfrak{C}_{\square_c}(S)(x,y):= \underset{T \to S \in (Nec \downarrow S)_{x,y}}{\text{colim}} C_{\square_c}(T)
 \end{eqnarray*}
 where $(Nec \downarrow S)_{x,y}$ is the category whose objects are morphisms $f: T \to S$ for some $T \in Nec$ such that $f(\alpha_T)=x$ and $f(\omega_T)=y$. For any $x,y,z \in S_0$ the composition law
\begin{eqnarray*}
\mathfrak{C}_{\square_c}(S)(y,z)  \otimes \mathfrak{C}_{\square_c}(S)(x,y) \to \mathfrak{C}_{\square_c}(S)(x,z)
\end{eqnarray*}
is induced as follows. Note that given $T \to S \in (Nec \downarrow S)_{x,y}$  and $U \to S \in (Nec \downarrow S)_{y,z}$, we obtain $T \vee U \to S \in  (Nec \downarrow S)_{x,z}$. Then the composition
\begin{eqnarray*}
C_{\square_c}(U) \otimes C_{\square_c}(T) \to C_{\square_c}((T \vee U)(\alpha_U, \omega_U)) \otimes C_{\square_c}((T \vee U)(\alpha_T,\omega_T)) \to C_{\square_c}(T \vee U)
\end{eqnarray*}
of morphisms of cubical sets with connections induces the desired composition law after taking colimits. Recall that $(T \vee U)(\alpha_U, \omega_U)$ denotes the unique subnecklace of $T \vee U$ with joints $J_{T \vee U}(\alpha_U,\omega_U)$ and vertices $V_{T \vee U}(\alpha_U, \omega_U)$. It follows from Remark 4.3 that the above composition passes to the colimit and yields a well defined composition rule. Finally, it is clear that $\mathfrak{C}_{\square_c}(S)$ is functorial in $S$. 
 \end{definition}
 
 \begin{remark}
 
The set of $n$-cells in $\mathfrak{C}_{\square_c}(S)(x,y)$ is 
\begin{eqnarray*}
\Big(\bigsqcup_{(T\to S) \in (Nec \downarrow S)_{x,y}}C_{\square_c}(T)_n \Big)/ \sim
\end{eqnarray*}
where the equivalence relation is generated by $(t: T \to S, \sigma) \sim (t': T'\to S, \sigma')$ if there is a map of necklaces $f: T \to T'$ such that $t=t' \circ f$ and $C_{\square_c}(f)(\sigma)=\sigma'$. Here $t: T \to S$ and $t': T' \to S$  are objects in $(Nec \downarrow S)_{x,y}$, and $\sigma$ and $\sigma'$ are $n$-cells in  $C_{\square_c}(T)$ and $C_{\square_c}(T')$, respectively. Any \textit{non-degenerate} $n$-cell $[t: T \to S, \sigma] \in \mathfrak{C}_{\square_c}(S)(x,y)_n$ may be represented by a pair $(r: R \to S, \sigma_R)$ where 
\begin{itemize}
\item $R$ is a necklace with $|V_R-J_R|=n$ such that there are no  $(u: U \to S) \in (Nec \downarrow S)_{x,y}$ with $|V_U -J_U|=n-1$ and $f: R \to U$ satisfying $r=u \circ f$, and
\item $\sigma_R \in C_{\square_c}(R)_n$ is the unique non-degenerate $n$-cell in $C_{\square_c}(R)$.
\end{itemize}
In fact, one can let $R=T_{\sigma}$ and $r=t \circ \iota_{\sigma}$ as in Remark 4.6. These representatives are not unique since we may have another representative $(r': R' \to S,\sigma_{R'})$ if there is a morphism of necklaces $h: R \to R'$ of type (iii) such that $r' \circ h=r$. We write $[r: R \to S]$ for the equivalence class of the non-degenerate $n$-cell  in $\mathfrak{C}_{\square_c}(S)(x,y)$ represented by $(r: R\to S, \sigma_R)$. Let $v$ be the $j$-th vertex in $V_R-J_R$. The face map $\partial^1_j: \mathfrak{C}_{\square_c}(S)(x,y)_n \to \mathfrak{C}_{\square_c}(S)(x,y)_{n-1}$ is given by $\partial^1_j[r: R\to S]= [\partial^1_jr: R_{v} \to S]$ where $R_{v}$ is the subnecklace of $R$ spanned by vertices $V_R-\{v\}$ and $\partial^1_jr$ is the restriction of $r$ to $R_{v}$. The face map $\partial^0_j: \mathfrak{C}_{\square_c}(S)(x,y)_n \to \mathfrak{C}_{\square_c}(S)(x,y)_{n-1}$ is given by $\partial^0_j[r: R \to S]= [\partial^0_jr: R(\alpha_R,v) \vee R(v,\omega_R) \to S]$ where $\partial^0_jr$ is the restriction of $r$ to $R(\alpha_R,v) \vee R(v,\omega_R)$. Of course $[\partial^1_jr: R_{v} \to S]$ and $[\partial^0_jr: R(\alpha_R,v) \vee R(v,\omega_R)\to S]$ may be degenerate cells in $\mathfrak{C}_{\square_c}(S)(x,y)_{n-1}$ even if $[r: R\to S]$ is non-degenerate. 
 \end{remark}
 
Let us recall Lurie's construction of $\mathfrak{C}: Set_{\Delta} \to Cat_{\Delta}$. Given integers $0 \leq  i \leq j$ denote by $P_{i,j}$ the category whose objects are subsets of the set $\{i, i+1, ..., j\}$ containing both $i$ and $j$ and morphisms are inclusions of sets. We have an isomorphism of categories $P_{i,j} \cong \mathbf{1}^{j-i-1}$ if $i<j$ and $P_{i,i} \cong \mathbf{1}^0$. For each integer $n \geq 0$ define a simplicial category $\mathfrak{C}(\Delta^n)$ whose objects are the elements of the set $\{0, ... , n\}$ and for any two objects $i$ and $j$ such that $i \leq j$, $\mathfrak{C}(\Delta^n)(i,j)$ is the simplicial set $N(P_{i,j})$, where $N: Cat \to Set_{\Delta}$ is the nerve functor. If $j < i$, $\mathfrak{C}(\Delta^n)(i,j)$ is defined to be empty. The composition law in the simplicial category $\mathfrak{C}(\Delta^n)$ is induced by the map of categories $P_{j,k} \times P_{i,k} \to P_{i,k}$ given by union of sets. The construction of $\mathfrak{C}(\Delta^n)$ is functorial with respect to simplicial maps between standard simplices. Then the functor $\mathfrak{C}: Set_{\Delta} \to Cat_{\Delta}$ is defined by $\mathfrak{C}(S):= \text{colim}_{\Delta^n \to S} \mathfrak{C}(\Delta^n)$.
\\ 

$\mathfrak{C}$ is defined as a colimit in the category of simplicial categories. Dugger and Spivak computed in \cite{DS11} the mapping spaces of $\mathfrak{C}$ explicitly via necklaces. More precisely, Proposition 4.3 of \cite{DS11} states that there is an isomorphism of simplicial sets
\begin{eqnarray*}
\underset{T \to S \in (Nec \downarrow S)_{x,y}}{\text{colim}} [ \mathfrak{C}(T)(\alpha_T, \omega_T)] \cong \mathfrak{C}(S)(x,y).
\end{eqnarray*}
We defined $\mathfrak{C}_{\square_c}$ having this formula in mind. We do it this way, as opposed to first defining $\mathfrak{C}_{\square_c}$ on standard simplices and then extending as a left Kan extension, to emphasize that maps of necklaces give rise to maps of cubical sets with connections and the relationship of this fact with Adams' cobar construction, as we will explain later on. The mapping spaces of the functor $\mathfrak{C}_{\square_c}$ are cubical sets with connections constructed by applying the Yoneda embedding to the category $P_{\mathbf{1}}(T)$ associated to a necklace $T$ and then taking a colimit, while the mapping spaces in $\mathfrak{C}$ are simplicial sets obtained by applying the nerve functor to $P_{\mathbf{1}}(T)$ and then taking a colimit. 
\\
 
Recall we have a triangulation functor $| \cdot | : Set_{\square_c} \to Set_{\Delta}$ defined on a cubical set with connections $K$ by $|K|:=\text{colim}_{\square_c^n \to K} N(\mathbf{1}^n) \cong \text{colim}_{\square_c^n \to K}  (\Delta^1)^{\times n}$. Define a functor $\mathfrak{T}: Cat_{\square_c} \to Cat_{\Delta}$ as follows. Given a category $\mathcal {K}$ enriched over $Set_{\square_c}$ define $\mathfrak{T}(\mathcal{K})$ to be the simplicial category whose objects are the objects of $\mathcal{K}$ and whose mapping spaces are given by $|\mathcal{K}(x,y)|$ for any objects $x$ and $y$ in $\mathcal{K}$. We have a composition law on $\mathfrak{T}(\mathcal{K})$ induced by applying the functor $| \cdot |$ to the composition law in $\mathcal{K}$ and using the fact that for cubical sets with connections $K$ and $K'$ we have a natural isomorphism $|K\otimes K'| \cong |K| \times |K'|$. In fact, since colimits commute we have the following isomorphisms of simplicial sets
\begin{eqnarray*}
| K \otimes K' | \cong  | \underset{ \square^n_c \to K, \square^m_c \to K'}{\text{colim}} \square_c^{n+m} | \cong  \underset{ \square^n_c \to K, \square^m_c \to K'}{\text{colim}} | \square_c^{n+m} | \cong \underset{ \square^n_c \to K, \square^m_c \to K'}{\text{colim}} (\Delta^1)^{\times n+m} 
\\
\cong \underset{ \square^n_c \to K, \square^m_c \to K'}{\text{colim}} (\Delta^1)^{\times n} \times (\Delta^1)^{\times m} \cong  \underset{ \square^n_c \to K }{\text{colim}} (\Delta^1)^{\times n} \times \underset{ \square^m_c \to K'}{\text{colim}} (\Delta^1)^{\times m} \cong |K| \times |K'|.
\end{eqnarray*} 

\begin{proposition} The functor $\mathfrak{C}: Set_{\Delta} \to Cat_{\Delta}$ is naturally isomorphic to the composition of functors
\begin{eqnarray*}
Set_{\Delta} \xrightarrow{\mathfrak{C}_{\square_c}} Cat_{\square_c} \xrightarrow{\mathfrak{T}} Cat_{\Delta}.
\end{eqnarray*}
\end{proposition}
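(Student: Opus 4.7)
The plan is to show both mapping spaces are computed by the same colimit over $(Nec\downarrow S)_{x,y}$, and then to match the composition laws. Fix a simplicial set $S$ and vertices $x,y\in S_0$. By Dugger--Spivak (Proposition 4.3 of \cite{DS11}) there is a natural isomorphism
\begin{eqnarray*}
\mathfrak{C}(S)(x,y) \;\cong\; \underset{T\to S\,\in\,(Nec\downarrow S)_{x,y}}{\text{colim}} \mathfrak{C}(T)(\alpha_T,\omega_T),
\end{eqnarray*}
and by the definition of $\mathfrak{C}$ on standard simplices together with Proposition 4.1, $\mathfrak{C}(T)(\alpha_T,\omega_T) = N(P(T)) \cong N(P_{\mathbf{1}}(T)) = N(\mathbf{1}^N) \cong (\Delta^1)^{\times N}$, where $N = |V_T - J_T|$. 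On the other hand, the triangulation functor $|\cdot|\colon Set_{\square_c}\to Set_{\Delta}$ is a left adjoint (being defined as a colimit over the category of elements) and therefore commutes with colimits, so
\begin{eqnarray*}
\mathfrak{T}(\mathfrak{C}_{\square_c}(S))(x,y) \;=\; \bigl|\mathfrak{C}_{\square_c}(S)(x,y)\bigr| \;\cong\; \underset{T\to S}{\text{colim}}\,\bigl|C_{\square_c}(T)\bigr| \;\cong\; \underset{T\to S}{\text{colim}}\,\bigl|\square_c^{N}\bigr|.
\end{eqnarray*}
Since $|\square_c^N| = \text{colim}_{\square_c^m\to \square_c^N} N(\mathbf{1}^m) \cong N(\mathbf{1}^N) \cong (\Delta^1)^{\times N}$, the two mapping spaces are the colimits of identical diagrams indexed by $(Nec\downarrow S)_{x,y}$, hence canonically isomorphic as simplicial sets.

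Next I would check naturality of this isomorphism with respect to morphisms $f\colon T\to T'$ in $(Nec\downarrow S)_{x,y}$. This reduces to observing that under the identifications $\mathfrak{C}(T)(\alpha_T,\omega_T)\cong N(P_{\mathbf{1}}(T))$ and $|C_{\square_c}(T)|\cong N(P_{\mathbf{1}}(T))$, the induced maps on mapping spaces on both sides coincide: both are obtained by applying the nerve to $P_{\mathbf{1}}(f)$, since the simplicial structure map $\mathfrak{C}(f)$ is $N(P(f))$ and the triangulation of $C_{\square_c}(f) = Y(P_{\mathbf{1}}(f))$ is again the nerve of $P_{\mathbf{1}}(f)$. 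In view of Proposition 4.1, these agree. Because of the generators/relations description of $Nec$ in Proposition 3.1, it suffices to verify this for morphisms of types (i), (ii), and (iii), which was carried out in Proposition 4.2.

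The remaining step, which I expect to be the most delicate, is to verify that this isomorphism of mapping spaces is compatible with composition, i.e.\ intertwines the composition in $\mathfrak{C}$ (induced by union of subsets $P_{j,k}\times P_{i,j}\to P_{i,k}$) with the composition in $\mathfrak{T}(\mathfrak{C}_{\square_c}(S))$ (induced by wedge of necklaces followed by $|K\otimes K'|\cong |K|\times |K'|$). Unwinding the definitions, given $T\to S \in (Nec\downarrow S)_{x,y}$ and $U\to S \in (Nec\downarrow S)_{y,z}$ with $N = |V_T-J_T|$ and $M=|V_U-J_U|$, one must check that the diagram
\begin{eqnarray*}
\mathbf{1}^M \times \mathbf{1}^N \;\cong\; P_{\mathbf{1}}(U)\times P_{\mathbf{1}}(T) \;\longrightarrow\; P_{\mathbf{1}}(T\vee U) \;\cong\; \mathbf{1}^{N+M}
\end{eqnarray*}
given by $((t_1,\dots,t_M),(s_1,\dots,s_N))\mapsto (s_1,\dots,s_N,t_1,\dots,t_M)$ agrees on one side with the union-of-sets map through the isomorphisms $\phi_T,\phi_U,\phi_{T\vee U}$ of Proposition 4.1, and on the other side with the canonical map $\square_c^N\otimes \square_c^M\to \square_c^{N+M}$ used to define the monoidal structure on $Set_{\square_c}$. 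Both reduce to the same concatenation of $0/1$-strings indexed by non-joint vertices, so the two composition laws match. Combining naturality in $S$ and in the index category yields a natural isomorphism $\mathfrak{C}\cong \mathfrak{T}\circ \mathfrak{C}_{\square_c}$.
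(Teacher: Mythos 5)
Your proposal is correct and follows essentially the same route as the paper: the Dugger--Spivak necklace formula for $\mathfrak{C}(S)(x,y)$, commutation of the triangulation functor with colimits, and the identification $|\square_c^N|\cong N(\mathbf{1}^N)$. You are in fact somewhat more thorough than the paper in explicitly checking naturality over $(Nec\downarrow S)_{x,y}$ and compatibility of the two composition laws (where the paper simply asserts the isomorphism of simplicial categories), and your verification that both compositions reduce to concatenation of $0/1$-strings is exactly the right check.
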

\begin{proof}
Let $Y(\square_c) \downarrow \square_c^N$ be the category whose objects are morphisms $\square_c^n \to \square_c^N$ of cubical sets with connections and whose morphisms are given by the corresponding commutative triangles.  Note $|\square_c^N|$ is the colimit in simplicial sets of the functor $Y(\square_c) \downarrow \square_c^N \to Set_{\Delta}$ that sends an object $(\square_c^n \to \square_c^N)$ to $N(\mathbf{1}^n)\cong (\Delta^1)^{\times n}$ and a morphism in $Y(\square_c) \downarrow \square_c^N$ to the corresponding induced morphism between nerves. The identity morphism $\square_c^N \to \square_c^N$  is a terminal object in $Y(\square_c) \downarrow \square_c^N$. Therefore, $|\square_c^N|=\text{colim}_{\square_c^n \to \square_c^N} N(\mathbf{1}^n)$ is given by the value of the functor on the identity morphism $\square_c^N \to \square_c^N$, so $|\square_c^N|=N(\mathbf{1}^N)$. 
\\

Let $S$ be a simplicial set. The objects of the simplicial categories $\mathfrak{T}(\mathfrak{C}_{\square_c}(S))$ and $\mathfrak{C}(S)$ are the same, i.e. the elements of $S_0$. Since the triangulation functor $| \cdot |$ commutes with colimits, we have the following natural isomorphisms 
\begin{eqnarray*}
(\mathfrak{T}(\mathfrak{C}_{\square_c}(S)))(x,y) \cong  \underset{T \to S \in (Nec \downarrow S)_{x,y}} {\text{colim}} |C_{\square_c}(T)| 
\cong  \underset{T \to S \in (Nec \downarrow S)_{x,y}} {\text{colim}} N(\mathbf{1}^{|V_T-J_T|}).
\end{eqnarray*}
Moreover, by Proposition 4.3 of \cite{DS11} it follows that we have natural isomorphisms
\begin{eqnarray*}
\underset{T \to S \in (Nec \downarrow S)_{x,y}} {\text{colim}} N(\mathbf{1}^{|V_T-J_T|}) \cong \underset{T \to S \in (Nec \downarrow S)_{x,y}} {\text{colim}}[\mathfrak{C}(T)(\alpha,\omega)]  \cong \mathfrak{C}(S)(x,y).
\end{eqnarray*}
Hence, we have an isomorphism of simplicial categories $\mathfrak{T}(\mathfrak{C}_{\square_c}(S)) \cong \mathfrak{C}(S)$ which is functorial on $S$. It follows that $\mathfrak{T} \circ \mathfrak{C}_{\square_c}$ and $\mathfrak{C}$ are naturally isomorphic functors. 
\end{proof}

\section{The left adjoint $\Lambda: Set_{\Delta} \to dgCat_k$ of the DG nerve functor}

In section 1.3.1 of \cite{Lur11} Lurie defines a functor $N_{dg}: dgCat_k \to Set_{\Delta}$, called the \textit{dg nerve}, which is weakly equivalent to the left adjoint of the composite functor
\begin{eqnarray*}
\Gamma: Set_{\Delta} \xrightarrow{\mathfrak{C}} Cat_{\Delta} \xrightarrow{\mathfrak{Q}_{\Delta}} dgCat_k
\end{eqnarray*}
where $\mathfrak{Q}_{\Delta}$ is the functor obtained by applying the normalized chains functor $Q_{\Delta}: Set_{\Delta} \to Ch_k$ on the mapping spaces. In this section we prove that the composite functor
\begin{eqnarray*}
\Lambda: Set_{\Delta} \xrightarrow{\mathfrak{C}_{\square_c}} Cat_{\square_c} \xrightarrow{\mathfrak{Q}_{\square^c}} dgCat_k,
\end{eqnarray*}
where $\mathfrak{Q}_{\square^c}$ is the functor obtained by applying the normalized chains functor $Q_{\square_c}: Set_{\square_c} \to Ch_k$ on the mapping spaces, is left adjoint to $N_{dg}$. 
\\

Recall Lurie's definition of $N_{dg}$. Let $\mathcal{C}$ be a dg category.  For each $n\geq0$, define $N_{dg}(\mathcal{C})_n$ to be the set of all ordered pairs of sets $(\{X_i \}_{0 \leq i \leq n}, \{f_I\})$, such that:
\begin{enumerate}
\item $X_0,X_1,...,X_n$ are objects of the dg category $\mathcal{C}$
\item $I$ is a subset $I =\{i_{-}< i_m < i_{m-1} < ... < i_1< i_{+} \} \subseteq [n]$ with $m \geq 0$ and $f_I$ is an element of  $\mathcal{C}(X_{i_-}, X_{i_+})_m$ satisfying
\begin{eqnarray*}
df_{I}=\sum_{1 \leq j \leq m} (-1)^j(f_{I-\{i_j\}}- f_{i_j< ...<i_1<i_+} \circ f_{i_-< i_m < ...<i_j}).
\end{eqnarray*}
\end{enumerate}
The structure maps in $N_{dg}(C)$ are defined as follows. If  $\alpha: [m] \to [n]$ is a nondecreasing function, then the induced map $N_{dg}(\mathcal{C})_n \to N_{dg}(\mathcal{C})_m$ is given by
\begin{eqnarray*}
(\{X_i\}_{0 \leq 1 \leq n}, \{f_I\}) \mapsto (\{X_{\alpha(j)} \}_{0 \leq j \leq m}, \{g_J\}),
\end{eqnarray*}
where $g_J= f_{\alpha(J)}$ if $\alpha|_{J}$ is injective, $g_J=id_{X_i}$ if $J=\{j,j'\}$ with $\alpha(j)=i=\alpha(j')$, and $g_J=0$ otherwise. 
\begin{theorem}
The functor $\Lambda: Set_{\Delta} \to dgCat_k$ is left adjoint to $N_{dg}: dgCat_k \to Set_{\Delta}$.
\end{theorem}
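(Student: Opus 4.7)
The plan is to establish $\Lambda \dashv N_{dg}$ by exhibiting a natural bijection $\text{Hom}_{dgCat_k}(\Lambda(S), \mathcal{C}) \cong \text{Hom}_{Set_\Delta}(S, N_{dg}(\mathcal{C}))$ for every simplicial set $S$ and dg category $\mathcal{C}$. Since $\mathfrak{C}_{\square_c}(S)$ is built from mapping spaces defined as colimits indexed by $(Nec \downarrow S)_{x,y}$ and $Q_{\square_c}$ preserves colimits, a dg functor out of $\Lambda(S)$ is equivalent to a compatible family of dg functors out of $\Lambda(\Delta^n)$ indexed by the simplices of $S$; likewise, by Yoneda, a map $S \to N_{dg}(\mathcal{C})$ is a compatible family of $n$-simplices of $N_{dg}(\mathcal{C})$. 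So the problem reduces to producing a natural bijection $\text{Hom}_{dgCat_k}(\Lambda(\Delta^n), \mathcal{C}) \cong N_{dg}(\mathcal{C})_n$.

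To unpack $\text{Hom}_{dgCat_k}(\Lambda(\Delta^n), \mathcal{C})$, a dg functor $F$ consists of objects $X_i := F(i)$ together with chain maps $F_{ij}: \Lambda(\Delta^n)(i, j) = Q_{\square_c}(\square_c^{j-i-1}) \to \mathcal{C}(X_i, X_j)$ respecting composition. By Remark 5.2 every non-degenerate cell of the domain is represented by an injective necklace map $R \to \Delta^n$, and the enriched functoriality together with the composition law of Definition 5.1 forces the value of $F$ on a multi-bead cell to be the composition in $\mathcal{C}$ of its values on the constituent single-bead cells; hence $F$ is determined by its values on single-bead cells. These injective single-bead maps $\Delta^{m+1} \to \Delta^n$ correspond to subsets $I = \{i_0 < \ldots < i_{m+1}\} \subseteq [n]$ with $|I| \geq 2$, and their associated top cells $\sigma_I$ have degree $|I|-2 = m$; setting $f_I := F(\sigma_I) \in \mathcal{C}(X_{i_0}, X_{i_{m+1}})_m$ produces precisely the family of data indexing an element of $N_{dg}(\mathcal{C})_n$.

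The central verification is that the chain-map condition on $F$ reproduces the Maurer--Cartan relation in the definition of $N_{dg}$. Using the cubical differential $\partial \sigma_I = \sum_{j=1}^m (-1)^j (\partial^1_j \sigma_I - \partial^0_j \sigma_I)$ and Proposition 4.2, I will identify the two face types: $\partial^1_j \sigma_I$ corresponds to inserting $i_j$ as a new joint, hence splitting the single bead into the wedge $\Delta^j \vee \Delta^{m+1-j}$, which in $\mathfrak{C}_{\square_c}$ is the composition $\sigma_{\{i_j, \ldots, i_{m+1}\}} \circ \sigma_{\{i_0, \ldots, i_j\}}$; while $\partial^0_j \sigma_I$ corresponds to deleting the interior vertex $i_j$, yielding $\sigma_{I \setminus \{i_j\}}$. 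Applying $F$, which respects composition and the differential, the equation $dF(\sigma_I) = F(\partial \sigma_I)$ then reads as exactly the relation for $df_I$ in Lurie's definition, once sign conventions are reconciled. For the inverse, given any $(\{X_i\}, \{f_I\}) \in N_{dg}(\mathcal{C})_n$ one sets $F(\sigma_I) := f_I$ on single-bead generators, extends by composition to multi-bead cells, and then by $k$-linearity; the Maurer--Cartan relation is precisely what is needed to verify chain-mapness.

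Naturality in $n$ follows by inspecting the structure maps of $N_{dg}$: cofaces correspond to restricting the subset indexing, matching the rule $g_J = f_{\alpha(J)}$; the codegeneracy $s_i$ sends $\sigma_{\{i, i+1\}}$ to the identity $0$-cell of $\mathfrak{C}_{\square_c}(\Delta^n)(i, i) = \square_c^0$, reproducing $g_{\{j, j'\}} = id$, while any $\sigma_{I'}$ with $\{i, i+1\} \subsetneq I'$ pulls back to a cell represented by a necklace map collapsing the $\Delta^1$ bead $[i, i+1]$, hence non-injective and degenerate in $Q_{\square_c}$, matching the convention $g_J = 0$. Naturality in $\mathcal{C}$ is immediate. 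I expect the main obstacle to be the sign bookkeeping in the Maurer--Cartan step, which requires carefully aligning the inside-out vertex labeling used in Proposition 4.2 with the outside-in subset labeling $\{i_- < i_m < \ldots < i_1 < i_+\}$ in Lurie's defining formula; once this dictionary is fixed, the remaining verifications reduce to routine combinatorics already set up by the work of the preceding sections.
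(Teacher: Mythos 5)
Your proposal is correct and follows essentially the same route as the paper: reduce by colimits to the cosimplicial case $\Delta^n$, identify a dg functor out of $\Lambda(\Delta^n)$ with the data $(\{X_i\},\{f_I\})$ via the single-bead generators $\sigma_I$, and derive Lurie's differential relation from the cubical boundary formula together with the necklace-face identifications of Proposition 4.2, with functoriality in $[n]$ handled exactly as you describe. You are in fact slightly more explicit than the paper about the inverse construction and the degeneracy conventions, and your flagged concern about reconciling the $\partial^1$/$\partial^0$ labeling is well placed, but this is only a bookkeeping matter and not a gap.
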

\begin{proof}
First, we show that for any standard simplex $\Delta^n$ and any dg category $\mathcal{C}$ there is bijection 
\begin{eqnarray*}
\theta_{n,\mathcal{C}}: dgCat_k(\Lambda(\Delta^n), \mathcal{C}) \cong Set_{\Delta}(\Delta^n, N_{dg}(\mathcal{C}))
\end{eqnarray*}
 which is functorial with respect to morphisms in the category $\Delta$. Given a dg functor $F: \Lambda(\Delta^n) \to \mathcal{C}$ we construct an $n$-simplex $$\theta_{n,\mathcal{C}} (F)=(\{X_0,...,X_n\}, \{f_I\})$$ in $N_{dg}(\mathcal{C})_n$. The objects of $\Lambda(\Delta^n)$ are the integers $0,1,...,n$ so we let $X_i=F(i)$ for $i=0,1,...,n$. For every subset $I=\{i_- < i_1 < ... < i_m <i_+ \} \subseteq [n]$ define $\sigma_I$ to be the generator of the chain complex $\Lambda(\Delta^n)(i_-, i_+)=Q_{\square_c}(\mathfrak{C}_{\square_c}(\Delta^n)(i_-,i_+))$ represented by the non-degenerate element of $(\mathfrak{C}_{\square_c}(\Delta^n)(i_-,i_+))_m$ which is the one bead sub-necklace inside $\Delta^n$ consisting of the $(m+1)$-simplex with $i_-$ as first vertex, $i_+$ as last vertex, and $i_1,...,i_m$ as non-joint vertices, in other words, $\sigma_I$ is represented by the $(m+1)$-simplex inside $\Delta^n$ spanned by vertices $i_-,i_1,...,i_m,i_+$. It follows from Remark 3.2 that
 \begin{eqnarray*}
 d\sigma_I=\sum_{j=1}^m(-1)^j(\partial^1_j \sigma_I - \partial^0_j \sigma_I)=\sum_{j=1}^m (-1)^j( \sigma_{I-\{i_j\}} -\sigma_{i_j< ...<i_1<i_+} \circ \sigma_{i_-< i_m < ...<i_j}).
 \end{eqnarray*}
Define $f_I=F(\sigma_I): X_{i_-} \to X_{i_+}$. Since the dg functor $F$ commutes with differentials at the level of mapping spaces, $f_I$ satisfies property (2) in the definition of the dg nerve functor. The functoriality of $\theta_{n,\mathcal{C}}$ with respect to simplicial maps between standard simplices follows from Proposition 4.2. Finally, since the functor $\Lambda$ preserves colimits, $\theta_{n,\mathcal{C}}$ induces a functorial bijection 
\begin{eqnarray*}
dgCat_k(\Lambda(S), \mathcal{C} ) \cong Set_{\Delta}(S, N_{dg}(\mathcal{C}))
\end{eqnarray*}
for any simplicial set $S$ and dg category $\mathcal{C}$.
\end{proof}

\begin{remark} Let $S$ be a simplicial set and $x,y \in S_0$. A generator $\xi$ of degree $n$ in the chain complex $\Lambda(S)(x,y)$ is an equivalence class which may be represented by a non-degenerate $n$-cell $\sigma$ in the cubical set with connections $\mathfrak{C}_{\square_c}(S)(x,y)$. Since $\mathfrak{C}_{\square_c}(S)(x,y)$ is defined as a colimit, the non-degenerate $n$-cell $\sigma$ is itself an equivalence class $[r: \Delta^{n_1} \vee ... \vee \Delta^{n_k} \to S]$, where $(r: \Delta^{n_1} \vee ... \vee \Delta^{n_k} \to S)  \in (Nec \downarrow S)_{x,y}$, $n_1+...+n_k-k=n$ and such that there is no $(u: \Delta^{m_1} \vee ... \vee \Delta^{m_l}  \to S ) \in (Nec \downarrow S)_{x,y}$ with $m_1 + ...+m_l - l < n$ together with a map of necklaces 
\begin{eqnarray*}
f: \Delta^{n_1} \vee ... \vee \Delta^{n_k} \to \Delta^{m_1} \vee ... \vee \Delta^{m_l}
\end{eqnarray*}
satisfying $r=u \circ f$. Moreover, any 
\begin{eqnarray*}
s: \Delta^{n_1} \vee ... \vee \Delta^{n_i} \vee \Delta^1 \vee \Delta^{n_{i+1}} \vee ... \vee \Delta^{n_k} \to S
\end{eqnarray*}
satisfying $r \circ  \pi=s$, where $\pi: \Delta^{n_1} \vee ... \vee \Delta^{n_i} \vee \Delta^1 \vee \Delta^{n_{i+1}} \vee ... \vee \Delta^{n_k} \to \Delta^{n_1} \vee ... \vee \Delta^{n_k}$ is the map of simplicial sets which collapses the $(i+1)$-th bead in the domain necklace to a point, also represents the equivalence class $\sigma$. This follows essentially from Proposition 4.2 (3).
 \end{remark}

\section{Rigidification and the cobar construction}

In this section, we relate the functor $\mathfrak{C}_{\square_c}: Set^0_{\Delta} \to Cat_{\square_c}$ to the cobar functor $\Omega: dgCoalg^0_k \to dgAlg_k$. More precisely, we prove that  $\Omega Q_{\Delta}(S)$, the cobar construction on the dg coalgebra of normalized chains on a simplicial set $S$ with one vertex $x$, is isomorphic as a dga to $\Lambda(S)(x,x)$, where $\Lambda$ is the functor obtained by applying the normalized cubical chains functor on the mapping spaces of $\mathfrak{C}_{\square_c}$, or naturally isomorphically, the left adjoint to the dg nerve functor, as described in the previous section. Then we deduce a relationship between $\mathfrak{C}: Set^0_{\Delta} \to Cat_{\Delta}$ and $\Omega: dgCoalg^0_k \to dgAlg_k$: we show $\Omega Q_{\Delta}(S)$ is naturally weakly equivalent (quasi-isomorphic) as a dga to $\Gamma (S) (x,x)$, where $\Gamma: Set_{\Delta} \to dgCat$ is the functor obtained by applying normalized chains to the mapping spaces of $\mathfrak{C}$.
\\

Let $k$ be a fixed commutative ring. We may consider $k$ as a graded $k$-module concentrated on degree $0$.  A graded coassociative coalgebra $(C, \Delta)$ over $k$ is \text{counital} if it is equipped with a degree $0$ map $\epsilon: C \to k$, called the \textit{counit}, such that $(\epsilon \otimes \text{id} ) \circ \Delta= \text{id} = (\text{id} \otimes \epsilon) \circ \Delta$. 

We say a differential graded coassociative coalgebra (dg coalgebra, for short) $(C,\partial, \Delta)$ over a commutative ring $k$ is \text{connected} if $C_0\cong k$. Given a connected dg coalgebra $(C,\partial, \Delta)$ which is free as a $k$-module in each degree, the \textit{cobar construction} of $C$ is the differential graded associative algebra $(\Omega C,D)$ defined as follows. Consider the graded $k$-module $s\overline{C}$ where $\overline{C}_i=C_i$ for $i>0$ and $\overline{C}_0=0$ and $s$ is the shift by $-1$, i.e. $(s\overline{C})_i=\overline{C}_{i+1}$. Let $\Delta= \text{Id} \otimes 1 + 1 \otimes \text{Id} + \Delta'$ and for any $c \in \overline{C}$ write $\Delta'(c)= \sum c' \otimes c''$. The underlying algebra of the cobar construction is the tensor algebra 
$$\Omega C= Ts\overline{C}= k \oplus s\overline{C} \oplus (s\overline{C} \otimes s\overline{C}) \oplus (s\overline{C} \otimes s\overline{C} \otimes s\overline{C}) \oplus ...$$
 and the differential $D$ is defined by extending $D(sc) =-s\partial c + \sum (-1)^{\text{deg } c'} sc' \otimes sc''$ as a derivation to all of $\Omega C$. This construction yields a functor $\Omega: dgCoalg^0_k \to dgAlg_k$ where $dgAlg_k$ is the category of augmented dg algebras over $k$. 
\\

For any simplicial set $S$, the chain complex $Q'_{\Delta}(S)$ of \textit{unnormalized} chains over $k$ has a natural coproduct $\Delta: Q'_{\Delta}(S) \to Q'_{\Delta}(S) \otimes Q'_{\Delta}(S)$ given by
\begin{eqnarray*}
\Delta(x) = \bigoplus_{p+q=n}f^p( x) \otimes l^q(x)
\end{eqnarray*}
for any $x  \in Q_{\Delta}(S)_n$, where $f^p$ denotes the \textit{front $p$-face map} (induced by the map $[p] \to [p+q]$, $i \mapsto i$) and $l^q$ is the \textit{last $q$-face map} (induced by the map $[q] \to [p+q]$, $i \mapsto i+p$). This coproduct is known as the Alexander-Whitney diagonal map. Moreover, this dg coalgebra structure passes to the \textit{normalized} chain complex $Q_{\Delta}(S)$. Thus, we may consider $Q_{\Delta}$ as a functor $Q_{\Delta}: Set_{\Delta} \to dgCoalg_k$. In particular, $Q_{\Delta}(S)$ is a dg coalgebra which is free as a $k$-module in each degree. If $S$ is $0$-reduced, i.e. $S_0=\{x\}$, then $Q_{\Delta}(S)$ is counital and connected with counit map given by the composition $Q_{\Delta}(S)  \twoheadrightarrow Q_{\Delta}(S)_0= k[x] \xrightarrow{\cong} k$.  From now on all of the coalgebras in this article will be assumed to be counital.  
  
 \begin{theorem} \label{7.1} Let  $S$ be a $0$-reduced simplicial set with $S_0=\{x\}$. There is an isomorphism of differential graded algebras $\Lambda(S)(x,x) \cong \Omega Q_{\Delta}(S)$.
 \end{theorem}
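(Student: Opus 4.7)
The plan is to construct an explicit isomorphism $\Phi\colon \Omega Q_\Delta(S) \to \Lambda(S)(x,x)$ of dgas. First I would match generators. By Remark 6.2, a non-degenerate $n$-cell of $\mathfrak{C}_{\square_c}(S)(x,x)$ is represented by a necklace map $r\colon \Delta^{n_1}\vee\cdots\vee\Delta^{n_k}\to S$ with each bead $\Delta^{n_i}$ sent to a non-degenerate simplex $\sigma_i$ of $S$ of dimension $n_i \geq 1$ and with $\sum_i(n_i-1)=n$. Because $S_0=\{x\}$, the only type (iii) ambiguities correspond to $\Delta^1$ beads mapping to $s_0(x)$, so restricting to representatives with every bead non-degenerate gives a canonical $k$-basis. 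These cells biject with the tensor generators $s\sigma_1\otimes\cdots\otimes s\sigma_k$ of $(\Omega Q_\Delta(S))_n$, and $\Phi$ is defined on this basis by $s\sigma_1\otimes\cdots\otimes s\sigma_k \mapsto [\sigma_1\vee\cdots\vee\sigma_k]$.

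Next I would verify that $\Phi$ is an algebra isomorphism. The multiplication on $\Lambda(S)(x,x)$ is induced by composition of cells in $\mathfrak{C}_{\square_c}(S)$, which on representatives is necklace concatenation, while on $\Omega Q_\Delta(S)$ it is tensor concatenation; these match under $\Phi$ tautologically. Since both differentials are graded derivations with respect to these multiplications, verifying the chain-map property reduces to checking $\Phi(D(s\sigma))=\partial\Phi(s\sigma)$ on a single-tensor generator $s\sigma$ for an arbitrary non-degenerate simplex $\sigma\colon \Delta^{n+1}\to S$.

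The heart of the proof is this local calculation. Using Remark 5.2, the cubical boundary on the single-bead cell is
\[
\partial[\sigma]=\sum_{j=1}^{n}(-1)^j\bigl([d_j\sigma] - [f^j\sigma\vee l^{n+1-j}\sigma]\bigr),
\]
while the cobar differential is $D(s\sigma)=-s\partial\sigma+\sum_{p=1}^{n}(-1)^{|f^p\sigma|}s(f^p\sigma)\otimes s(l^{n+1-p}\sigma)$. Remark 3.2 identifies the AW-subnecklaces $\Delta^{p}\vee\Delta^{n+1-p}$ of $\Delta^{n+1}$ precisely with the terms of the Alexander--Whitney reduced coproduct, so the "split" part of $\partial[\sigma]$ is sent by $\Phi^{-1}$ to the reduced-coproduct part of $D(s\sigma)$ term by term. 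The inner face contributions $[d_j\sigma]$ for $1\leq j\leq n$ match the corresponding $s(d_j\sigma)$ terms of $-s\partial\sigma$. The outer faces $d_0\sigma$ and $d_{n+1}\sigma$ appearing in $-s\partial\sigma$ must be accounted for by the $p=1$ and $p=n$ split terms: when $f^1\sigma$ equals the degenerate $s_0(x)$, the element $s(f^1\sigma)$ vanishes in the normalized chains so the coproduct term disappears, while type (iii) collapses $[f^1\sigma\vee l^n\sigma]$ onto $[d_0\sigma]$, and symmetrically at the other end.

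The main obstacle will be the sign bookkeeping, and in particular verifying that the normalization conventions plus the type (iii) identifications reconcile the outer simplicial face terms with the degenerate cases of the AW-split cells. Once the identity $\Phi(D(s\sigma))=\partial\Phi(s\sigma)$ is established on single generators, the derivation property of both differentials together with the algebra-map property of $\Phi$ propagates it to all of $\Omega Q_\Delta(S)$, completing the proof.
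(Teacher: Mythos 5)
Your identification of graded algebras is right, and your reduction of the chain-map check to single-bead generators via the derivation property is exactly how one should proceed. But the map you propose, $s\sigma_1\otimes\cdots\otimes s\sigma_k \mapsto [\sigma_1\vee\cdots\vee\sigma_k]$ on the nose, is \emph{not} a chain map, and the gap is precisely in the step you flag as "sign bookkeeping." Your reconciliation of the outer faces $d_0\sigma$ and $d_{n+1}\sigma$ only treats the case where $f^1\sigma$ (resp.\ $l^1\sigma$) is the degenerate $1$-simplex $s_0(x)$. When $f^1\sigma$ is a non-degenerate $1$-simplex --- the generic situation for a non-simply-connected $S$, which is the whole point of the theorem --- the cobar differential of $s\sigma$ contains \emph{both} the linear term $\mp s(d_0\sigma)$ from $-s\partial\sigma$ \emph{and} the quadratic term $\pm s(f^1\sigma)\otimes s(d_0\sigma)$ from the reduced coproduct, whereas the cubical boundary of $[\sigma]$ contains only the single split cell $[f^1\sigma\vee d_0\sigma]$ and no standalone $[d_0\sigma]$. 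A concrete check: if $\sigma$ is a $2$-simplex with non-degenerate faces $a_i=d_i\sigma$, then $\partial[\sigma]=-[a_1]+[a_2\vee a_0]$, while $D(s\sigma)=-sa_0+sa_1-sa_2-sa_2\otimes sa_0$; your map sends the former to $-sa_1+sa_2\otimes sa_0$, which is not $\pm D(s\sigma)$.

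The missing idea is that the isomorphism must be twisted on degree-one simplices: the paper sends (the class of) a $1$-simplex $\sigma$ to $s\overline{\sigma}+1_k$ rather than to $s\overline{\sigma}$, and extends multiplicatively. Expanding $(sa_2+1)\otimes(sa_0+1)$ is exactly what produces the missing linear terms $sa_0$ and $sa_2$ and makes the two differentials agree. (The paper also organizes the argument slightly differently, first exhibiting $\Lambda(S)(x,x)$ as a quotient of the cobar construction of a \emph{truncated} coalgebra structure on unnormalized chains, but that is a presentational choice; the affine correction on $1$-simplices is the substantive point your proposal lacks, and without it the map fails to commute with the differentials.)
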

 
 \begin{proof}  For each integer $n \geq 0$ the boundary map  $\partial: Q'_{\Delta} (S)_n \to Q'_{\Delta}(S)_{n-1} $ and the coproduct $\Delta: Q'_{\Delta} (S)_n \to \bigoplus_{p+q=n} Q'_{\Delta}(S)_p\otimes Q'_{\Delta}(S)_q$ can be written as sums $\partial= \sum_{i=0}^{n} (-1)^i \partial_i$ and $\Delta=\sum_{i=0}^n  \Delta_i$ as usual. In particular, for $\sigma \in S_n$,  $\Delta_0(\sigma)= \text{min}\sigma \otimes \sigma$ and $\Delta_n{\sigma}= \sigma \otimes \text{max}\sigma$ where $\text{min}{\sigma}$ and $\text{max}\sigma$ denote the first and last vertices of $\sigma$, respectively. The truncated maps $\partial'=\sum_{i=1}^{n-1} (-1)^i \partial_i$ and  $\Delta'=\sum_{i=1}^{n-1} (-1)^i \Delta_i$ also define a differential graded coassociative coalgebra structure on $Q'_{\Delta}(S)$. Consider the dga $\Omega Q'_{\Delta}(S) =  \Omega  (Q'_{\Delta}(S), \partial' , \Delta')$. First, we show $\Lambda(S)(x,x)= Q_{\square_c}(\mathfrak{C}_{\square_c}(S)(x,x)) \cong \Omega  Q'_{\Delta}(S) /\sim$ for some equivalence relation $\sim$ and then we construct an isomorphism 
 \begin{eqnarray*}
 \Omega  Q'_{\Delta}(S) /\sim \text{} \cong  \Omega Q_{\Delta}(S).
 \end{eqnarray*}
The dga $\Omega  Q'_{\Delta}(S)  $ has as underlying complex the tensor algebra $Ts\overline{Q'_{\Delta}(S)}$ together with differential $D'_{\Omega}=\partial' + \Delta'$ extended as a derivation to all of $Ts\overline{Q'_{\Delta}(S)}$. We denote a monomial $s\sigma_1 \otimes ... \otimes s\sigma_k \in Ts\overline{Q'_{\Delta}(S)}$ by $[\sigma_1|...|\sigma_k]$. Let $s_0(x) \in Q'_{\Delta}(S)_1$ be the generator corresponding to the degenerate $1$-simplex at $x$. We take a quotient of $Ts\overline{Q'_{\Delta}(S)}$  by the equivalence relation generated by
 \begin{eqnarray*}
 [\sigma_1 |... | \sigma_k ]\sim [\sigma_1 | ... |\sigma_{i-1}  | \sigma_{i+1} | ... |\sigma_k]
 \end{eqnarray*}
if for some $1 \leq i \leq k$ we have $\sigma_i=s_0(x)$ (in particular $[\sigma_1] \sim 1_k$ if $\sigma_1=s_0(x)$); and
 \begin{eqnarray*}
 [\sigma_1 | ... | \sigma_k ]\sim 0
 \end{eqnarray*}
 if $\sigma_i \in Q'_{\Delta}(S)_{n_i}$ is a degenerate simplex with $n_i > 1$ for some $1 \leq i \leq k$. The first relation corresponds to the identification in the colimit defining $\mathfrak{C}_{\square_c}(S)(x,x)$ arising from Remark 4.6; the second relation corresponds to modding out by degenerate chains in the definition of the normalized chain complex $Q_{\square_c}(\mathfrak{C}_{\square_c}(S)(x,x))$. Both the differential $D'_{\Omega}$ and the algebra structure of  $Ts\overline{Q'_{\Delta}(S)}$ pass to the quotient
 \begin{eqnarray*}
 Ts\overline{Q'_{\Delta}(S)}/\sim.
 \end{eqnarray*}  
It is clear that we have an isomorphism of dga's
 \begin{eqnarray*}
 Q_{\square_c}(\mathfrak{C}_{\square_c}(S) (x,x)) \cong \Omega Q'_{\Delta}(S) /\sim
 \end{eqnarray*} 
 since necklaces in $S$ correspond to monomials of generators in $Q'_{\Delta}(S)$.
 \\
 
We define an isomorphism of dga's 
\begin{eqnarray*}
\tilde{\varphi}: \Omega  Q'_{\Delta}(S) /\sim \text{} \to \Omega Q_{\Delta}(S).
\end{eqnarray*}
Given $\sigma \in Q'_{\Delta}(S)$ denote by $\overline{\sigma}$ the equivalence class of $\sigma$ in  $Q_{\Delta}(S)$. First define $\varphi  [\sigma ] = [\overline{\sigma}]$ if $\text{deg} \sigma >1$, $\varphi  [\sigma ] = \overline{\sigma} +1_k$ if $\text{deg} \sigma = 1$, and $\varphi(1_k)=1_k$. Extend $\varphi$ as an algebra map to obtain a map $\varphi: \Omega Q'_{\Delta}(S) \to \Omega Q_{\Delta}(S)$. It follows by a short computation that the map $\varphi$ is a chain map. Moreover, $\varphi$ induces a map of dga's $\tilde{\varphi}: \Omega  Q'_{\Delta}(S) /\sim \text{} \to \Omega Q_{\Delta}(S)$. The map $\tilde{\varphi}$ is an isomorphism  of dga's, in fact, the inverse map $\psi:  \Omega Q_{\Delta}(S) \to \Omega Q'_{\Delta}(S) /\sim$ is given by defining $\psi[\overline{\sigma}]= \big[ [\sigma] \big]$ if $\text{deg} \sigma >1$, $\psi [\overline{\sigma}]= \big[ [\sigma] \big] -\big[1_k\big]$ if $\text{deg}\sigma=1$, and $\psi(1_k)=\big[ 1_k \big]$ and then extending $\psi$ as an algebra map, where $\big[ [\sigma] \big]$ denotes the equivalence class of $[\sigma] \in \Omega Q'_{\Delta}$ in the quotient $\Omega Q'_{\Delta}(S) /\sim$.  
\end{proof}
 
We now relate the dga's $\Omega Q_{\Delta}(S)$ and $\Gamma(S)(x,x)$. We will use the following lemma which follows from an acyclic models argument. 

\begin{lemma} \label{7.2}
For any cubical set with connections $K$ the chain complex $Q_{\Delta}(|K|)$ is naturally weakly equivalent to $Q_{\square_c}(K)$, where $| \cdot |: Set_{\square_c} \to Set_{\Delta}$ is the triangulation functor. 
\end{lemma}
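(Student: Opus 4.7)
The plan is to apply the classical method of acyclic models (Eilenberg--MacLane) to the two functors $F := Q_{\square_c}(-)$ and $G := Q_\Delta(|-|)$ from $Set_{\square_c}$ to $Ch_k$, taking as models the representable cubical sets with connections $\mathcal{M} = \{\square_c^n\}_{n \geq 0}$.

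First, I would verify that both functors are \emph{free on the models} $\mathcal{M}$. For $F$ this is immediate from the definition of $Q_{\square_c}$: the module $F(K)_n$ is freely generated by the non-degenerate $n$-cells of $K$, and each such cell is the image of the top cell of $\square_c^n$ under a unique morphism $\square_c^n \to K$, so $F$ is representable-free with distinguished basis element in $F(\square_c^n)$ given by the top cell. For $G$, use that the triangulation functor $|-|$ commutes with colimits and that $|\square_c^n| \cong (\Delta^1)^{\times n}$ by Proposition 5.3; the non-degenerate simplices of $|K|$ are pushed forward naturally from the non-degenerate simplices of $(\Delta^1)^{\times n}$ along maps $\square_c^n \to K$, so $G$ is also free on $\mathcal{M}$ with basis the non-degenerate simplices of $(\Delta^1)^{\times n}$.

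Second, I would verify that both functors are \emph{acyclic on the models}. For $G$ this is clear: $G(\square_c^n) = Q_\Delta((\Delta^1)^{\times n})$ is acyclic in positive degrees because $(\Delta^1)^{\times n}$ is a contractible simplicial set (an $n$-fold cartesian product of the contractible simplicial set $\Delta^1$). For $F$ I would invoke Antolini's normalization theorem in \cite{Ant02}, which shows that the normalized cubical chain complex with connections correctly computes cubical singular homology; specialized to the representable $\square_c^n$ this yields $H_*(Q_{\square_c}(\square_c^n)) \cong k$ concentrated in degree zero.

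Third, once both functors are free and acyclic on a common set of models, and since both carry the canonical augmentation to the $k$-module freely generated by the set of path components of $|K|$, the method of acyclic models applied in both directions produces natural chain maps $\Phi \colon F \Rightarrow G$ and $\Psi \colon G \Rightarrow F$ inducing the identity on $H_0$, each unique up to natural chain homotopy. Uniqueness forces $\Psi \circ \Phi$ and $\Phi \circ \Psi$ to be naturally chain homotopic to the respective identity transformations, so $\Phi_K$ is a natural chain homotopy equivalence, in particular a natural quasi-isomorphism. The main obstacle is the acyclicity statement for $F$ on representables: without connections, the normalized chains of a representable cube admit a well-known explicit contracting homotopy, but once we further quotient by the submodule generated by the images of connection maps one must check that acyclicity persists. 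This is precisely the technical point addressed in \cite{Ant02}, which I would cite rather than reprove.
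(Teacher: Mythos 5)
Your proposal is correct and follows essentially the same route as the paper: the acyclic models theorem applied to $Q_{\square_c}$ and $Q_{\Delta}\circ|\cdot|$ with the representables $\{\square_c^n\}$ as models, the same freeness bases (the top cell, respectively the $n!$ non-degenerate simplices of $(\Delta^1)^{\times n}$), and agreement on $H_0$. The only difference is that you spell out, with a citation to Antolini, the acyclicity of the normalized-with-connections complex on representables, a point the paper dismisses as clear.
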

\begin{proof}
This proposition follows from the Acyclic Models Theorem applied to the two functors 
\begin{eqnarray*}
Q_{\Delta} \circ | \cdot |,  Q_{\square_c}: Set_{\square_c} \to Ch_k.
\end{eqnarray*}
Define the collection of models in $Set_{\square_c}$ to be $\mathcal{M}=\{\square^0_c, \square^1_c, ... \}$, where $\square^j_c$ is the standard $j$-cube with connections. It is clear that both $Q_{\Delta} \circ | \cdot |$ and $Q_{\square_c}$ are acyclic on these models. Recall a functor $F: \mathcal{C} \to Ch_k$ is \textit{free} on $\mathcal{M}$ if there exist a collection $\{M_j\}_ {j \in \mathcal{J}}$ where each $M_j$ is an object in $\mathcal{M}$ (possibly with repetitions, possibly not including all of the objects in $\mathcal{M}$) together with elements $m_j \in F(M_j)$ such that for any object $X$ of $\mathcal{C}$ we have that $\{F(f)(m_j) \in F(X)| j \in \mathcal{J}, (f: M_j \to X) \in \mathcal{C}(M_i,X)\}$ forms a basis for $F(X)$. Clearly $Q_{\square_c}$ is free on $\mathcal{M}$ since we can take $M_j=\square^j_c, \mathcal{J}=\{0,1,2,...,\}$, and define $m_j \in Q_{\square_c}(M_j)=Q_{\square_c}(\square^j_c)$ to be the generator corresponding to the unique non-degenerate element in $(\square^j_c)_j$ (i.e. $m_j$ is the top non-degenerate cell of $\square^j_c$). Note that the simplicial set $|\square^j_c| \cong (\Delta^1)^{\times j}$ has $j!$ non-degenerate $j$-simplices $\sigma^j_1,...,\sigma^j_{j!} \in |\square^j_c|_j$. Hence, $Q_{\Delta} \circ | \cdot |$ is also free on $\mathcal{M}$ since we can take $\{M^0_1, M^1_1, M^2_1, M^2_2, ..., M^j_1,...,M^j_{j!}, M^{j+1}_1,... \}_{j \in \mathcal{J}}$ where $M^j_k=\square^j_c, \mathcal{J}=\{0,1,2,...\}$, and $m^j_k \in Q_{\Delta}(|M^j_k|)$ the generator corresponding to the $j$-simplex $\sigma^j_k \in |\square^j_c|_j$. 
\\

We have a natural isomorphism of functors $H_0(Q_{\Delta} \circ | \cdot |) \cong H_0(Q_{\square_c})$, in fact, for any $K \in Set_{\square_c}$ there is a natural bijection between $|K|_0$ and $K_0$ and any two vertices $x$ and $y$ are connected by a sequence of $1$-simplices in $|K|_1$ if and only if they are connected by a sequence of $1$-cubes in $K_1$. By the Acyclic Models Theorem there exist natural transformations $\phi: Q_{\Delta} \circ | \cdot | \to Q_{\square_c}$ and $\psi: Q_{\square_c} \to Q_{\Delta} \circ | \cdot |$ such that each composition $\phi \circ \psi$ and $\psi \circ \phi$ is chain homotopic to the identity map. 
\end{proof}
We use the above lemma to relate $\Omega Q_{\Delta}(S)$ and $\Gamma(S)(x,x)$. 
 \begin{proposition} \label{7.3}
 Let $S$ be a $0$-reduced simplicial set with $S_0=\{x\}$. The differential graded associative algebras $\Omega Q_{\Delta}(S)$ and $\Gamma(S)(x,x)$ are naturally weakly equivalent.
 \end{proposition}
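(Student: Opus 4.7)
The plan is to connect $\Omega Q_{\Delta}(S)$ and $\Gamma(S)(x,x)$ by a natural zig-zag of dga quasi-isomorphisms, combining Theorem \ref{7.1} (which already identifies the cobar side with $\Lambda(S)(x,x)$), Proposition 5.3 (which identifies $\mathfrak{C}(S)(x,x)$ with the triangulation of $\mathfrak{C}_{\square_c}(S)(x,x)$), and Lemma \ref{7.2} (comparing cubical and simplicial chains), together with an additional multiplicativity check.

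First I would set $K := \mathfrak{C}_{\square_c}(S)(x,x)$, a monoid in $Set_{\square_c}$, and observe that the natural isomorphism $\mathfrak{T}(\mathfrak{C}_{\square_c}(S))(x,x) \cong \mathfrak{C}(S)(x,x)$ supplied by Proposition 5.3 is actually an isomorphism of simplicial monoids. This is immediate from the construction of $\mathfrak{T}$, whose composition law is built from the composition in a cubical category using the natural isomorphism $|K \otimes K'| \cong |K| \times |K'|$ recorded in Section 5. Consequently
\begin{equation*}
\Gamma(S)(x,x) = Q_{\Delta}(\mathfrak{C}(S)(x,x)) \cong Q_{\Delta}(|K|)
\end{equation*}
as dga's, with the right-hand side a dga via the shuffle (Eilenberg--Zilber) map and the monoid structure on $|K|$.

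Second, Theorem \ref{7.1} supplies an isomorphism of dga's $\Omega Q_{\Delta}(S) \cong \Lambda(S)(x,x) = Q_{\square_c}(K)$, with the right-hand side carrying the dga structure induced by $Q_{\square_c}(K) \otimes Q_{\square_c}(K) \cong Q_{\square_c}(K \otimes K) \xrightarrow{Q_{\square_c}(\mu)} Q_{\square_c}(K)$. The problem therefore reduces to producing a natural zig-zag of dga quasi-isomorphisms between $Q_{\square_c}(K)$ and $Q_{\Delta}(|K|)$ for $K$ a cubical monoid with connections. I would use the explicit natural transformation $\tau_K : Q_{\square_c}(K) \to Q_{\Delta}(|K|)$ sending an $n$-cell $\sigma$ to the signed sum, indexed by shuffle permutations, of the $n!$ non-degenerate $n$-simplices in the standard triangulation of $|\sigma| \cong (\Delta^1)^{\times n}$. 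Lemma \ref{7.2} shows $\tau_K$ is a chain quasi-isomorphism, and the monoidality $|K \otimes K'| \cong |K| \times |K'|$ together with the classical compatibility of the shuffle triangulation of a product of cubes with the shuffle product on simplicial chains shows that $\tau_K$ is strictly multiplicative.

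The main obstacle is precisely this multiplicativity check: verifying that the square relating $\tau_K \otimes \tau_K$ followed by the shuffle Eilenberg--Zilber, and $\tau_K$ applied after the cubical product, commutes on the nose requires some genuine sign bookkeeping. If the explicit verification becomes delicate, one may instead invoke a monoidal version of the Acyclic Models Theorem applied to the lax monoidal functors $Q_{\square_c}$ and $Q_{\Delta} \circ |\cdot|$ on the category of cubical monoids with connections, producing a natural multiplicative quasi-isomorphism without an explicit formula. In either case, composing the resulting dga equivalence with the isomorphism of Theorem \ref{7.1} and the identification $\Gamma(S)(x,x) \cong Q_{\Delta}(|K|)$ from the first paragraph yields the desired natural zig-zag $\Omega Q_{\Delta}(S) \simeq \Gamma(S)(x,x)$ of dga's.
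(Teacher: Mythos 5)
Your proposal follows essentially the same route as the paper: identify $\Omega Q_{\Delta}(S)$ with $Q_{\square_c}(\mathfrak{C}_{\square_c}(S)(x,x))$ via Theorem \ref{7.1}, identify $\Gamma(S)(x,x)$ with $Q_{\Delta}|\mathfrak{C}_{\square_c}(S)(x,x)|$ via Proposition 5.3, and bridge the two by Lemma \ref{7.2} together with compatibility of the triangulation and chains functors with the monoidal structures. The only difference is that you spell out the multiplicativity of the comparison map (via the shuffle triangulation of cubes), which the paper asserts in one line; your elaboration is correct and consistent with the paper's argument.
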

 \begin{proof}
 By Theorem \ref{7.1} we have an isomorphism 
 \begin{eqnarray*}
 \Omega Q_{\Delta} (S) \cong \Lambda(S)(x,x) = Q_{\square_c}(\mathfrak{C}_{\square_c}(S)(x,x)).
 \end{eqnarray*}
 By Lemma \ref{7.2} and the fact that the triangulation functor and chains functor preserve the monoidal structures, it follows that the dga's $Q_{\square_c}(\mathfrak{C}_{\square_c}(S)(x,x))$ and $Q_{\Delta}|\mathfrak{C}_{\square_c}(S)(x,x)|$ are naturally weakly equivalent. Finally, note that we have isomorphisms  
  \begin{eqnarray*}
  Q_{\Delta}|C_{\square_c}(S)(x,x)| = Q_{\Delta}((\mathfrak{T} \circ \mathfrak{C}_{\square_c})(S)(x,x)) \cong Q_{\Delta}(\mathfrak{C}(S)(x,x))=\Gamma(S)(x,x).
  \end{eqnarray*} 
 \end{proof}

\section{Properties of $\mathfrak{C}: Set_{\Delta} \to Cat_{\Delta}$ }

We recall several homotopy theoretic properties of the rigidification functor $\mathfrak{C}: Set_{\Delta} \to Cat_{\Delta}$, in particular, its behavior with respect to Kan weak equivalences  and its relationship with path spaces. These will be used in the final section of the article.
\\

A map of simplicial sets $f: S \to S'$ is called a \textit{Kan weak equivalence} if it is a weak equivalence in the Quillen model structure, namely, if $f$ induces a weak homotopy equivalence of spaces $|f|: |S| \to |S'|$. A map of simplicial sets $f:S \to S'$ is called a \textit{categorical equivalence} if $f$ induces a weak equivalence $\mathfrak{C}(f): \mathfrak{C}(S) \to \mathfrak{C}(S')$ of simplicial categories in the Bergner model structure. Recall that a functor of simplicial categories $F: \mathcal{C} \to \mathcal{C}'$ is called a \textit{weak equivalence of simplicial categories} if
\begin{itemize}
\item $F$ induces an essentially surjective functor at the level of homotopy categories, and
\item for all $x,y \in \mathcal{C}$, $F: \mathfrak{C}(S)(x,y) \to  \mathcal{C}'(F(x), F(y))$ is a Kan weak equivalence of simplicial sets. 
\end{itemize}

The Quillen model structure on $Set_{\Delta}$ has Kan equivalences as weak equivalences and Kan complexes as fibrant objects. There is a different model structure on $Set_{\Delta}$, the Joyal model structure, which has categorical equivalences as weak equivalences and quasi-categories as fibrant objects. Moreover, the Quillen model structure is a left Bousfield localization of the Joyal model structure. In particular, a categorical equivalence is always a Kan weak equivalence. The converse is not true in general, but a Kan weak equivalence between Kan complexes is always a categorical equivalence. This is Proposition 17.2.8 in \cite{Rie14}, which we record below.

\begin{proposition} \label{Kan} If $f: S \to S'$ is a Kan weak equivalence between Kan complexes $S$ and $S'$ then $\mathfrak{C}(f): \mathfrak{C}(S) \to \mathfrak{C}(S')$ is a weak equivalence of simplicial categories.  
\end{proposition}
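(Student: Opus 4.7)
The plan is to deduce this from the general theory of left Bousfield localization applied to the comparison between the Quillen and Joyal model structures on $Set_{\Delta}$. These structures share the same underlying category and the same class of cofibrations (the monomorphisms), but the Quillen structure has more weak equivalences; in fact, the Quillen model structure is a left Bousfield localization of the Joyal model structure, as noted in the paragraph preceding the proposition. A standard consequence of left Bousfield localization is that a map between objects fibrant in the localized structure is a weak equivalence in the localized structure if and only if it is a weak equivalence in the original structure.

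First I would observe that every Kan complex is a quasi-category, since the inner-horn lifting condition is a special case of the Kan extension condition. Hence both $S$ and $S'$ are fibrant in both the Quillen and the Joyal model structures. Second, I would invoke the Bousfield localization principle recalled above: since $f: S \to S'$ is a Kan (Quillen) weak equivalence between objects fibrant in the localized Quillen structure, $f$ is automatically a weak equivalence in the Joyal model structure as well. By the very definition of a categorical equivalence, this means $\mathfrak{C}(f): \mathfrak{C}(S) \to \mathfrak{C}(S')$ is a weak equivalence of simplicial categories in the Bergner model structure, which is the desired conclusion.

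The main obstacle is locating and invoking a clean version of the Bousfield localization principle, together with the identification of the Quillen structure as a left Bousfield localization of the Joyal structure; both are standard but somewhat technical to set up from scratch. A more hands-on alternative would use the Quillen equivalence $\mathfrak{C} \dashv N$ between $(Set_{\Delta}, \textrm{Joyal})$ and $(Cat_{\Delta}, \textrm{Bergner})$: since every simplicial set is Joyal-cofibrant and any Kan complex is already Joyal-fibrant, one could analyze a Joyal fibrant replacement of $f$ and use that $\mathfrak{C}$, as a left Quillen functor, preserves weak equivalences between cofibrant objects once they have been shown to be Joyal weak equivalences. This amounts, however, to the same core observation: the hypothesis that $S$ and $S'$ are Kan, rather than merely quasi-categories, is what allows one to upgrade a Quillen weak equivalence to a Joyal one.
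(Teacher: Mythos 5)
Your proposal is correct and matches the paper's approach: the paper records this statement as Proposition 17.2.8 of Riehl's \emph{Categorical homotopy theory}, justified by exactly the observation you spell out, namely that the Quillen model structure is a left Bousfield localization of the Joyal model structure and Kan complexes are fibrant in the localized structure. Your write-up simply makes explicit the localization principle that the paper invokes by citation.
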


A map $f: C \to C'$ of connected dg coalgebras is called a \textit{quasi-isomorphism} if $f$ induces an isomorphism of coalgebras after passing to homology. On the other hand, a map $f: C \to C'$ of connected dg coalgebras is called an \textit{$\Omega$-quasi-isomorphism} if $f$ induces a quasi-isomorphism of dga's $\Omega f: \Omega C \to \Omega C'$.  An $\Omega$-quasi-isomorphism between connected dg coalgebras is always a quasi-isomorphism. The converse is not true in general, namely, a quasi-isomorphism between connected dg coalgebras might not be an $\Omega$-quasi-isomorphism. However, if $C$ and $C'$ are connected dg coalgebras which are \textit{simply connected} (i.e. $C_1=0=C'_1$) then a quasi-isomorphism $f: C\to C'$ is an $\Omega$-quasi-isomorphism. This follows by comparing Eilenberg-Moore spectral sequences. There are model structures of the category of connected dg coalgebras having each of these two notions as the weak equivalences, but we do not need these for the purposes of this paper. 
\\

Let $Set_{\Delta}^0$ be the full subcategory of the category $Set_{\Delta}$ of simplicial sets whose objects are $0$-reduced simplicial sets. Let $dgCoalg_k^0$ be the full subcategory of the category $dgCoalg_k$ of dg coalgebras whose objects are connected dg coalgebras. The normalized chains functor restricts to a functor $Q_{\Delta}: Set_{\Delta}^0 \to dgCoalg_k^0$. 

\begin{proposition}\label{7.4}
The functor $Q_{\Delta}: Set_{\Delta}^0 \to dgCoalg_k^0$ sends Kan weak equivalences to quasi-isomorphisms and categorical equivalences to $\Omega$-quasi-isomorphisms. 
\end{proposition}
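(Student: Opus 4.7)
The plan has two halves. For the first half, the claim that $Q_{\Delta}$ sends Kan weak equivalences to quasi-isomorphisms is essentially classical. Given a Kan weak equivalence $f\colon S \to S'$ between $0$-reduced simplicial sets, the underlying map of chain complexes $Q_{\Delta}(f)\colon Q_{\Delta}(S) \to Q_{\Delta}(S')$ is a quasi-isomorphism because Kan weak equivalences induce weak homotopy equivalences on geometric realizations, and singular (equivalently, normalized simplicial) chains send these to quasi-isomorphisms. Since the Alexander-Whitney coproduct is natural in $S$, $Q_{\Delta}(f)$ is automatically a map of dg coalgebras, so it is a quasi-isomorphism in $dgCoalg_k^0$.

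The second half is where the machinery of the earlier sections pays off. Let $f\colon S \to S'$ be a categorical equivalence between $0$-reduced simplicial sets with $S_0 = \{x\}$ and $S'_0 = \{x'\}$. By definition of categorical equivalence, $\mathfrak{C}(f)\colon \mathfrak{C}(S) \to \mathfrak{C}(S')$ is a weak equivalence of simplicial categories in the Bergner sense, so in particular the induced map of mapping simplicial sets
\begin{eqnarray*}
\mathfrak{C}(f)\colon \mathfrak{C}(S)(x,x) \to \mathfrak{C}(S')(x',x')
\end{eqnarray*}
is a Kan weak equivalence of simplicial sets. Applying the normalized chains functor $Q_{\Delta}$ and using the classical fact recalled above, the induced map
\begin{eqnarray*}
\Gamma(f)\colon \Gamma(S)(x,x) \to \Gamma(S')(x',x')
\end{eqnarray*}
is a quasi-isomorphism of dg algebras.

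Now I invoke Proposition \ref{7.3}, which provides a natural zig-zag of quasi-isomorphisms of dga's connecting $\Omega Q_{\Delta}(S)$ with $\Gamma(S)(x,x)$, and similarly for $S'$. Naturality of this zig-zag with respect to $f$ gives a commutative diagram in $dgAlg_k$ in which the horizontal arrows are quasi-isomorphisms. By the two-out-of-three property applied along the zig-zag, the vertical arrow $\Omega Q_{\Delta}(f)\colon \Omega Q_{\Delta}(S) \to \Omega Q_{\Delta}(S')$ is a quasi-isomorphism of dga's. This is precisely the condition that $Q_{\Delta}(f)$ be an $\Omega$-quasi-isomorphism of connected dg coalgebras.

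The main obstacle is really bookkeeping: verifying that the zig-zag of Proposition \ref{7.3} is genuinely natural in $S$ as a zig-zag of dga maps, which in turn requires that the acyclic-models comparison in Lemma \ref{7.2} can be arranged naturally on the full subcategory of mapping cubical sets $\mathfrak{C}_{\square_c}(S)(x,x)$. Everything in the construction of $\mathfrak{C}_{\square_c}$, $\mathfrak{T}$, $Q_{\square_c}$, and $Q_{\Delta}\circ |\cdot|$ is already functorial in $S$, so one just needs to note that the natural transformations produced by acyclic models (chosen once and for all on the category $Set_{\square_c}$) yield a natural zig-zag when evaluated on the mapping cubical sets. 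No further input beyond the results already established is required.
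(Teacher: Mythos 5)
Your proposal is correct and follows essentially the same route as the paper: the first half is the classical statement, and the second half combines the Kan weak equivalence $\mathfrak{C}(S)(x,x)\to\mathfrak{C}(S')(x',x')$ induced by a categorical equivalence with the natural weak equivalence $\Omega Q_{\Delta}(S)\simeq\Gamma(S)(x,x)$ of Proposition \ref{7.3} and two-out-of-three. Your closing remark about checking naturality of the acyclic-models zig-zag is a point the paper passes over silently, so it is a welcome (and correct) extra precaution rather than a deviation.
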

\begin{proof}
The proof of the first part of the proposition is well known. For the second suppose $f: S \to S'$ is a categorical equivalence and $S_0=\{x\}$, $S_0'=\{x'\}$. Then we have an induced Kan weak equivalence of simplicial sets $\mathfrak{C}(f): \mathfrak{C}(S)(x,x) \to \mathfrak{C}(S)(x',x')$. This induces a dga quasi-isomorphism 
$Q_{\Delta}\mathfrak{C}(f): Q_{\Delta} ( \mathfrak{C}(S)(x,x)) \to Q_{\Delta} (\mathfrak{C}(S)(x',x'))$. The result follows since the dga's $Q_{\Delta} ( \mathfrak{C}(S)(x,x))$ and $Q_{\Delta} (\mathfrak{C}(S)(x',x'))$ are naturally weakly equivalent to the dga's $\Omega Q_{\Delta}(S)$  and $\Omega Q_{\Delta}(S')$, respectively, by Proposition \ref{7.3}. 
\end{proof}

For any pointed topological space  $(X,b)$ denote by $\text{Sing}(X,b)$ the sub-simplicial set of $\text{Sing}(X)$ whose $n$-simplices are the continuous maps $|\Delta^{n}| \to X$ that take all vertices of $|\Delta^n|$ to $b$. Define a new functor $Q_{\Delta}^K: Set^0_{\Delta} \to dgCoalg^0_k$ by $Q_{\Delta}^K(S):= Q_{\Delta}(\text{Sing}(|S|,x))$, where $S_0=\{x\}$ and $\text{Sing}(|S|,x)$ is the Kan complex of singular simplices $|\Delta^n| \to |S|$ sending all vertices of $|\Delta^n|$ to $x \in |S|$. In general, the functor $Q_{\Delta}$ does not send Kan weak equivalences of simplicial sets to $\Omega$-quasi-isomorphisms, but $Q_{\Delta}^K$ does.

\begin{proposition} \label{derivedchains} The functor  $Q_{\Delta}^K: Set^0_{\Delta} \to dgCoalg^0_k$ sends Kan weak equivalences of simplicial sets to $\Omega$-quasi-isomorphisms of dg coalgebras.
\end{proposition}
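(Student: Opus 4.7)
The strategy is to reduce the statement to Propositions \ref{Kan} and \ref{7.4} by passing from a Kan weak equivalence of $0$-reduced simplicial sets to an induced map between the Kan complexes $\text{Sing}(|S|,x)$. Given a Kan weak equivalence $f:S \to S'$ in $Set^0_{\Delta}$ with $S_0=\{x\}$ and $S'_0=\{x'\}$, functoriality forces $f(x)=x'$, and therefore $|f|:(|S|,x) \to (|S'|,x')$ is a pointed weak homotopy equivalence. The plan is to prove that the induced map $\text{Sing}(|f|):\text{Sing}(|S|,x) \to \text{Sing}(|S'|,x')$ is a Kan weak equivalence between Kan complexes; then applying Proposition \ref{Kan} to upgrade it to a categorical equivalence, followed by Proposition \ref{7.4}, will show that $Q_{\Delta}(\text{Sing}(|f|))=Q_{\Delta}^K(f)$ is an $\Omega$-quasi-isomorphism.

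The verification splits into two routine observations. First, for any pointed space $(X,b)$ the simplicial set $\text{Sing}(X,b)$ is a Kan complex: for $n \geq 2$ every horn $\Lambda^n_k$ already contains all vertices of $\Delta^n$, so any filling in the Kan complex $\text{Sing}(X)$ automatically lies in $\text{Sing}(X,b)$, while the $1$-horns are filled by the constant path at $b$. Second, $\text{Sing}(|f|)$ is a Kan weak equivalence. Since $S_0=\{x\}$ the realization $|S|$ is path-connected with basepoint $x$, and the inclusion $\text{Sing}(|S|,x) \hookrightarrow \text{Sing}(|S|)$ is a Kan weak equivalence because both sides compute $\pi_n(|S|,x)$ for all $n\geq 0$; the analogous fact holds for $S'$. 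Combining this with the fact that $\text{Sing}(|f|):\text{Sing}(|S|) \to \text{Sing}(|S'|)$ is a Kan weak equivalence (as $|f|$ is a weak homotopy equivalence and $\text{Sing}$ is a right Quillen functor for the standard model structure on topological spaces) and invoking the $2$-out-of-$3$ property in the evident commutative square gives the claim.

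Once $\text{Sing}(|f|)$ is known to be a Kan weak equivalence between Kan complexes, Proposition \ref{Kan} promotes it to a categorical equivalence in $Set^0_{\Delta}$, and Proposition \ref{7.4} then delivers the desired $\Omega$-quasi-isomorphism $Q_{\Delta}^K(f)$. There is no real obstacle here beyond the conceptual one of choosing to factor through the Kan complex model $\text{Sing}(|S|,x)$: this is precisely the reason $Q_{\Delta}^K$ is defined with an implicit Kan replacement, and it is exactly the mechanism that converts Kan weak equivalences into $\Omega$-quasi-isomorphisms, in contrast with the behavior of $Q_{\Delta}$ on arbitrary $0$-reduced simplicial sets.
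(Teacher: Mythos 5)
Your proposal is correct and follows essentially the same route as the paper: pass to $|f|$ as a pointed weak homotopy equivalence, observe that $\text{Sing}(|f|)$ is then a Kan weak equivalence between the Kan complexes $\text{Sing}(|S|,x)$ and $\text{Sing}(|S'|,x')$, and conclude by Propositions \ref{Kan} and \ref{7.4}. The only difference is that you spell out the (correct) horn-filling and two-out-of-three verifications that the paper leaves as a standard fact.
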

\begin{proof}
Let $S, S' \in Set^0_{\Delta}$ with $S_0=\{x\}$ and $S_0'=\{x'\}$. If $f: S \to S'$ is a Kan weak equivalence then $|f|: (|S|,x) \to (|S'|, x')$ is a homotopy equivalence of pointed spaces. The functor $(X,b) \mapsto \text{Sing}(X,b)$ from the category of pointed spaces to $Set^0_{\Delta}$ sends homotopy equivalences of pointed spaces to Kan weak equivalences of $0$-reduced Kan complexes. Thus $\text{Sing}(|f|): \text{Sing}(|S|,x) \to  \text{Sing}(|S'|,x')$ is a Kan weak equivalence. 
It follows from Propositions \ref{Kan} and \ref{7.4} that $Q_{\Delta} (\text{Sing}(|f|)) : Q_{\Delta} ( \text{Sing}(|S|,x)) \to Q_{\Delta}( \text{Sing}(|S'|,x'))$ is an $\Omega$-quasi-isomorphism.
\end{proof}

We now explain the relationship between mapping spaces of $\mathfrak{C}$ and different kinds of spaces of paths in a path connected topological space. This relationship is deduced from the homotopy theoretic properties of $\mathfrak{C}$ as studied in Section 2.2 of \cite{Lur09} and in \cite{DS211} using different methods.
\\

For any simplicial category $\mathcal{C}$  define the \textit{simplicial nerve} $N_{\Delta}(\mathcal{C})$ to be the simplicial set whose set of $n$-simplices is given by
\begin{eqnarray*}
(N_{\Delta}(\mathcal{C}))_n=\text{Hom}_{Cat_{\Delta}}(\mathfrak{C}(\Delta^n),\mathcal{C}).
\end{eqnarray*}
It follows that $N_{\Delta}: Cat_{\Delta} \to Set_{\Delta}$ is the right adjoint of $\mathfrak{C}: Set_{\Delta} \to Cat_{\Delta}$. If $\mathcal{C}$ is a topological category, then the \textit{topological nerve} $N_{Top}(\mathcal{C})$ is defined to be the simplicial nerve of the simplicial category $\text{Sing}(\mathcal{C})$ obtained by applying $\text{Sing}$ to each morphism space of $\mathcal{C}$. As its well known, for any topological monoid $G$, $|N_{Top}(G)|$ is a model for the classifying space $BG$. 
\\

In Section 2.2 of \cite{Lur09}, Lurie shows that the pair of adjoint functors $(\mathfrak{C}, N_{\Delta})$ defines a Quillen equivalence between model categories $Set_{\Delta}$ with the Joyal model structure and $Cat_{\Delta}$ with the Bergner model structure. In particular, for any fibrant simplicial category $\mathcal{C}$ (a simplicial category whose mapping spaces are Kan complexes) the counit map $\mathfrak{C}(N_{\Delta}(\mathcal{C})) \to \mathcal{C}$ is a weak equivalence of simplicial categories. This also follows from Theorem 1.5 of \cite{DS211}. 
\\

Let $X$ be a path connected topological space and let $x,y \in X$. Define the space of Moore paths in $X$ between $x$ and $y$ to be $P^M_{x,y}X= \{ (\gamma, r)| \gamma: [0,\infty) \to X, \gamma(0)=x, \gamma(s)=y \text{ for } r \leq s, r \in [0,\infty)\}$ topologized as a subset of $\text{Map}( [0,\infty), X) \times [0, \infty)$, where $\text{Map}( [0,\infty), X)$ is equipped with the compact-open topology. Define a functor $$\mathcal{P}: Top \to Cat_{Top}$$ from the category of topological spaces to the category of topological categories as follows. For any $X \in Top$ the objects of $\mathcal{P}(X)$ are the points of $X$. For any $x,y \in X$, define the space of morphisms $\mathcal{P}(X)(x,y):= P^M_{x,y}X$ with composition rule induced by concatenation of paths. We call $\mathcal{P}: Top \to Cat_{Top}$ the \textit{path category functor}. 
\\

 The functor $\mathfrak{C}: Set_{\Delta} \to Cat_{\Delta}$ is a simplicial model for the path category functor as shown in Proposition \ref{7.6} below.  Denote by $\text{Sing}(\mathcal{P}X)$ the simplicial category obtained by applying $\text{Sing}$ to the morphism spaces of the topological category $\mathcal{P}X$.
 
\begin{proposition}\label{7.6}
Let $X$ be a path connected topological space. The simplicial categories $\mathfrak{C}(\text{Sing}(X))$ and $\text{Sing}(\mathcal{P}X)$ are weakly equivalent. 
\end{proposition}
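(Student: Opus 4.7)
The plan is to leverage the Quillen equivalence between $(Set_{\Delta}, \text{Joyal})$ and $(Cat_{\Delta}, \text{Bergner})$ provided by the adjunction $(\mathfrak{C}, N_{\Delta})$, which was recalled in the paragraph just before the proposition. First I would observe that $\text{Sing}(\mathcal{P}X)$ is a fibrant simplicial category: each mapping space $\text{Sing}(P^M_{x,y}X)$ is a Kan complex since the singular complex of any topological space is fibrant. Consequently the counit
\[
\mathfrak{C}(N_{\Delta}(\text{Sing}(\mathcal{P}X))) \to \text{Sing}(\mathcal{P}X)
\]
is a weak equivalence of simplicial categories. Thus it suffices to produce a Joyal weak equivalence $\eta: \text{Sing}(X) \to N_{\Delta}(\text{Sing}(\mathcal{P}X)) = N_{Top}(\mathcal{P}X)$ and apply $\mathfrak{C}$.

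To construct $\eta$ I would use the adjunction and instead build a simplicial functor $\Phi: \mathfrak{C}(\text{Sing}(X)) \to \text{Sing}(\mathcal{P}X)$. For every $n$-simplex $\sigma: |\Delta^n| \to X$ and every pair $i \leq j$, an object $X \in P_{i,j}$ (a subset of $\{i,\dots,j\}$ containing $i$ and $j$) determines a piecewise-linear path in $|\Delta^n|$ from $i$ to $j$ that passes through the vertices indexed by $X$, and inclusions of subsets induce canonical affine homotopies between these paths. Composing with $\sigma$ and rescaling to total length $j-i$ yields a map $N(P_{i,j}) = \mathfrak{C}(\Delta^n)(i,j) \to \text{Sing}(P^M_{\sigma(i),\sigma(j)}X)$ compatible with the concatenation composition in $\mathcal{P}X$ and with all simplicial structure maps. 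Taking colimits over $(Nec \downarrow \text{Sing}(X))_{x,y}$ (via the Dugger–Spivak description recalled in Section 5) assembles these into the desired $\Phi$, and $\eta$ is its adjoint.

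The main obstacle is then to show that $\Phi$ is a weak equivalence of simplicial categories, for by two-out-of-three in the triangle $\mathfrak{C}(\eta)$ followed by the counit this is equivalent to $\eta$ being a categorical equivalence. Essential surjectivity on homotopy categories is immediate from path-connectedness of $X$, so the real content is showing that for each pair $x,y \in X$ the induced map
\[
\Phi_{x,y}: \mathfrak{C}(\text{Sing}(X))(x,y) \to \text{Sing}(P^M_{x,y}X)
\]
is a Kan weak equivalence. The cleanest route is to invoke the space-level statement highlighted in the introduction as the heart of Section 2.2 of \cite{Lur09}, which is exactly this fact; it is proved there by combining (a) the observation that a necklace $T \to \text{Sing}(X)$ at $(x,y)$ together with an element of $C_{\square_c}(T) \cong \square_c^{|V_T - J_T|}$ produces by concatenation and rescaling a Moore path from $x$ to $y$, and (b) a homotopy-fiber argument showing that the preimage of any Moore path $\gamma \in P^M_{x,y}X$ is contractible, for example by filtering necklaces by number of beads and verifying each stratum is contractible via an affine straight-line homotopy in the parameter cube.

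Finally, combining the weak equivalence $\mathfrak{C}(\eta)$ with the counit weak equivalence gives a zig-zag
\[
\mathfrak{C}(\text{Sing}(X)) \xrightarrow{\mathfrak{C}(\eta)} \mathfrak{C}(N_{\Delta}(\text{Sing}(\mathcal{P}X))) \to \text{Sing}(\mathcal{P}X)
\]
of weak equivalences of simplicial categories, proving the proposition. I expect step (b) above, the contractibility of the homotopy fibers of $\Phi_{x,y}$, to be the genuine technical core; everything else is either formal consequence of the adjunction or a direct unpacking of the combinatorial definitions from Sections 3–5.
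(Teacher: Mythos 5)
Your formal skeleton (the counit $\mathfrak{C}(N_{\Delta}(\text{Sing}(\mathcal{P}X)))\to\text{Sing}(\mathcal{P}X)$ is a weak equivalence because $\text{Sing}(\mathcal{P}X)$ is fibrant, then two-out-of-three) agrees with the paper, but from there the two arguments diverge. The paper never constructs a functor $\Phi:\mathfrak{C}(\text{Sing}(X))\to\text{Sing}(\mathcal{P}X)$ and never analyzes mapping spaces directly. Instead it argues entirely at the level of Kan complexes: $\mathcal{P}X$ is weakly equivalent as a topological category to the one-object category $\Omega X$ (by conjugating with a chosen system of paths), so $N_{Top}(\mathcal{P}X)\simeq N_{Top}(\Omega X)$, whose realization is $B(\Omega^M_bX)\simeq X$; hence $N_{Top}(\mathcal{P}X)$ and $\text{Sing}(X)$ are weakly equivalent Kan complexes ($N_{Top}(\mathcal{P}X)$ is Kan because its homotopy category is a groupoid), so by Proposition \ref{Kan} applying $\mathfrak{C}$ to this zig-zag gives weak equivalences of simplicial categories, and the counit finishes. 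This soft classifying-space argument is precisely engineered to avoid the hard mapping-space comparison that your route runs into; what your route would buy in exchange is an explicit, point-set model for the equivalence (essentially Adams' maps $\theta_n$), which the paper only sketches at the end of Section 8 "for completeness."

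The genuine gap in your proposal is the step asserting that $\Phi_{x,y}:\mathfrak{C}(\text{Sing}(X))(x,y)\to\text{Sing}(P^M_{x,y}X)$ is a Kan weak equivalence. The statement you invoke from the introduction (and from Section 2.2 of \cite{Lur09}) says only that these two simplicial sets \emph{are} weakly equivalent, via the zig-zag $\text{Hom}^R_S(x,y)\leftarrow|\text{Hom}^R_S(x,y)|_{Q^{\bullet}}\rightarrow\mathfrak{C}(S)(x,y)$ together with the map $\theta$; it does not say that \emph{your} map $\Phi_{x,y}$ is a weak equivalence, and an abstract equivalence of two spaces does not promote an arbitrary map between them to an equivalence. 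To use your $\Phi$ you would either have to check compatibility of $\Phi_{x,y}$ with Lurie's zig-zag, or carry out your sketch (b) in full --- and as written that sketch is not a proof: contractibility of point preimages does not by itself imply a weak equivalence (one needs a quasi-fibration or Theorem-A-type argument), and the "filtration by number of beads" is exactly the delicate combinatorial analysis that occupies Lurie's Section 2.2 and Dugger--Spivak's mapping-space paper. Since the proposition only claims the existence of a zig-zag of weak equivalences, the cleaner fix is to abandon $\Phi$ and argue as the paper does.
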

\begin{proof}
Choose $b \in X$. The topological category $\mathcal{P}X$ is weakly equivalent to $\Omega X$, the topological category with a single object $b$ and as morphism space $\Omega X (b,b)= \Omega^M_bX$ the space of based Moore loops at $b$ with composition law given by concatenation of loops. A weak equivalence $\mathcal{P}X \to \Omega X$ of topological categories is given by fixing a collection of paths $\mathcal{O}= \{ \gamma_x \}_{x \in X}$ where $\gamma_x$ is a path from $b$ to $x$. More precisely, we have a functor $F_{\mathcal{O}}: \mathcal{P}X \to \Omega X$  given on objects by sending all objects of $\mathcal{P}X$ to the single object of $\Omega X$ and on morphisms $F _{\mathcal{O}}: \mathcal{P}X(x,y) \to \Omega X(b,b)$ is the continuous map $F_{\mathcal{O}}(\gamma)=  \gamma_y^{-1}*  \gamma * \gamma_x$, where $*$ denotes concatenation. The functor $F_{\mathcal{O}}$ is clearly a weak equivalence of topological categories. The topological nerve $N_{Top}= N _{\Delta} \circ \text{Sing}: Cat_{Top} \to Set_{\Delta}$ sends weak homotopy equivalences of topological categories to Kan weak equivalence of simplicial sets. Thus, the simplicial sets $N_{Top}(\mathcal{P}X)$ and  $N_{Top}(\Omega X)$ are Kan weakly equivalent. Moreover, the geometric realization $|N_{Top}(\Omega X)|$ is a model for $B(\Omega X)$, the classifying space of the topological monoid of based loops. It follows from $B(\Omega X) \simeq X$ that the simplicial sets $N_{Top}(\mathcal{P}X)$ and $\text{Sing}(X)$ are Kan weakly equivalent. On the other hand, since the homotopy category of $N_{Top}(\mathcal{P}X)$ is a groupoid it follows that $N_{Top}(\mathcal{P}X)$ is a Kan complex \cite{Joy02}. By Proposition \ref{Kan} we have that $\mathfrak{C}(N_{Top}(\mathcal{P}X))$ and $\mathfrak{C}(\text{Sing}(X))$ are weakly equivalent as simplicial categories. Since $\mathfrak{C} \circ N_{\Delta} (\mathcal{C})  \simeq \mathcal{C}$ for any $\mathcal{C} \in \mathcal{Set}_{\Delta}$ whose mapping spaces are Kan complexes, it follows that $\mathfrak{C} (N_{Top}(\mathcal{P}X)) = \mathfrak{C} (N_{\Delta}( \text{Sing}( \mathcal{P}X))) \simeq \text{Sing}(\mathcal{P}X)$. Hence, the simplicial categories $\mathfrak{C}(\text{Sing}(X))$ and $\text{Sing}(\mathcal{P}X)$ are weakly equivalent. \end{proof}

We have the following corollary.
\begin{corollary}\label{7.7}
Let $X$ be a path connected topological space and $b \in X$. The simplicial categories with one object $\mathfrak{C}(\text{Sing}(X,b))$ and $\text{Sing}(\Omega X) $ are weakly equivalent. 
\end{corollary}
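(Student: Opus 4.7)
The plan is to deduce this corollary from Proposition \ref{7.6}, which already provides the equivalence $\mathfrak{C}(\text{Sing}(X)) \simeq \text{Sing}(\mathcal{P}X)$ for the many-object versions. The work is to pass from these to the one-object subcategories sitting at the basepoint $b$. The structural point is that $\text{Sing}(X,b)$ is the ``pointed'' analogue of $\text{Sing}(X)$ on the simplicial side, while $\text{Sing}(\Omega X)$ is the single-object subcategory of $\text{Sing}(\mathcal{P}X)$ at $b$, where we implicitly use the weak equivalence $\mathcal{P}X \simeq \Omega X$ of topological categories that already appeared in the proof of Proposition \ref{7.6}.

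First I would verify that $\text{Sing}(X,b)$ is itself a Kan complex. For $n \geq 2$, every vertex of $\Delta^n$ lies in any horn $\Lambda^n_k$, so a map $\Lambda^n_k \to \text{Sing}(X,b)$ already forces all vertex data to be $b$; a filler in $\text{Sing}(X)$ (which exists because $\text{Sing}(X)$ is Kan) automatically lies in $\text{Sing}(X,b)$. For $n=1$, a horn is a single vertex $b$, filled by the constant $1$-simplex at $b$. Next I would verify that the inclusion $\iota : \text{Sing}(X,b) \hookrightarrow \text{Sing}(X)$ is a Kan weak equivalence when $X$ is path connected: path-connectedness gives surjectivity on $\pi_0$, and any continuous $|\Delta^n| \to X$ can be homotoped rel vertices to one sending every vertex of $|\Delta^n|$ to $b$ by choosing paths from each vertex value to $b$, which gives the needed surjectivity and injectivity on all higher homotopy groups.

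Since both $\text{Sing}(X,b)$ and $\text{Sing}(X)$ are Kan complexes and $\iota$ is a Kan weak equivalence, Proposition \ref{Kan} implies that $\mathfrak{C}(\iota) : \mathfrak{C}(\text{Sing}(X,b)) \to \mathfrak{C}(\text{Sing}(X))$ is a weak equivalence of simplicial categories, and in particular induces a Kan weak equivalence on mapping spaces at $b$. Combining with Proposition \ref{7.6}, I obtain a zig-zag of weak equivalences
$$\mathfrak{C}(\text{Sing}(X,b))(b,b) \simeq \mathfrak{C}(\text{Sing}(X))(b,b) \simeq \text{Sing}(\mathcal{P}X)(b,b) = \text{Sing}(P^M_{b,b}X) = \text{Sing}(\Omega^M_b X) = \text{Sing}(\Omega X)(b,b).$$
Since both $\mathfrak{C}(\text{Sing}(X,b))$ and $\text{Sing}(\Omega X)$ have a single object, a zig-zag of Kan weak equivalences between their mapping spaces upgrades to a zig-zag of weak equivalences of simplicial categories, which is the desired conclusion.

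The main obstacle is the technical verification of the first two points, namely that $\text{Sing}(X,b)$ is Kan and that $\iota$ is a Kan weak equivalence; once these are established the argument is just an invocation of Propositions \ref{Kan} and \ref{7.6}. A secondary subtlety is tracking that the equivalence of Proposition \ref{7.6} actually matches the object $b \in \mathfrak{C}(\text{Sing}(X))$ with the object $b \in \text{Sing}(\mathcal{P}X)$, which is direct from its construction via the counit $\mathfrak{C} \circ N_\Delta(\mathcal{C}) \to \mathcal{C}$ (which is the identity on objects).
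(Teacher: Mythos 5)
Your proposal is correct and follows essentially the same route as the paper: the paper's proof likewise observes that the inclusion $\text{Sing}(X,b) \hookrightarrow \text{Sing}(X)$ is a Kan weak equivalence of Kan complexes, applies Proposition \ref{Kan} to get $\mathfrak{C}(\text{Sing}(X,b))(b,b) \simeq \mathfrak{C}(\text{Sing}(X))(b,b)$, and concludes via Proposition \ref{7.6}. The only difference is that you spell out the horn-filling and weak-equivalence verifications that the paper takes as known.
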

\begin{proof} For path connected $X$ the inclusion $\text{Sing}(X,b) \hookrightarrow \text{Sing}(X)$ is a Kan weak equivalence of Kan complexes, so $\mathfrak{C}(\text{Sing}(X))(b,b) \simeq \mathfrak{C}(\text{Sing}(X,b))(b,b)$. Hence, by Proposition \ref{7.6}, $\mathfrak{C}(\text{Sing}(X,b)) \simeq \text{Sing}(\Omega X)$.
\end{proof}

We finish this section by describing more explicitly the weak equivalence of simplicial sets between $\mathfrak{C}(\text{Sing}(X))(x,y)$ and $\text{Sing}(\mathcal{P}X)(x,y)$ given by Proposition \ref{7.6}. We review this for completeness but it is not strictly necessary to follow Section 9. We follow Chapter $2$ of \cite{Lur09}.
\\

Define a cosimplicial object $J^{\bullet}: \Delta \to (\partial \Delta^1 \downarrow Set_{\Delta})$, by letting $J^n$ to be the quotient of the standard simplex $\Delta^{n+1}$ by collapsing the last face (i.e. the face spanned by  vertices $[0,...,n]$) to a vertex. The quotient simplicial set $J^n$ has exactly two vertices which we denote by the integers $0$ and $n+1$. For any $S \in Set_{\Delta}$ and $x,y \in S_0$, there is a simplicial set $\text{Hom}^R_S(x,y)$ called the \textit{right mapping space} defined by letting $\text{Hom}^R_S(x,y)_n$ be the set of all morphisms of simplicial sets $\varphi: J^n \to S$ such that $\varphi(0)=x$ and $\varphi(n+1)=y$, together with structure face and degeneracy maps defined to coincide with the corresponding structure maps of on $S_{n+1}$. Define a cosimplicial simplicial set $Q^{\bullet}$ by letting $Q^n:=\mathfrak{C}(J^n)(0,n+1)$ and denote by $| - |_{Q^{\bullet}}: Set_{\Delta} \to Set_{\Delta}$ the realization functor associated to $Q^{\bullet}$. Recall Proposition 2.2.4.1 of \cite{Lur09}:

\begin{proposition} \label{7.5} 
Let $S$ be an quasi-category containing a pair of objects $x$ and $y$. There is a natural Kan weak equivalence of simplicial sets
\begin{eqnarray*}
f: |\text{Hom}^R_S(x,y)|_{Q^{\bullet} } \to \mathfrak{C}(S)(x,y).
\end{eqnarray*} 
\end{proposition}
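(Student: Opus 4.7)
The plan is to first construct the natural map $f$ and then verify it is a weak equivalence by exploiting the quasi-category hypothesis. For the construction, a $k$-simplex of $|\text{Hom}^R_S(x,y)|_{Q^{\bullet}}$ is represented by a pair $(\varphi, q)$ where $\varphi \in \text{Hom}^R_S(x,y)_n$ is a map $\varphi: J^n \to S$ with $\varphi(0)=x$, $\varphi(n+1)=y$, and $q \in Q^n_k = \mathfrak{C}(J^n)(0,n+1)_k$. Since $\mathfrak{C}$ is functorial and the cosimplicial structure on $Q^{\bullet}$ comes from the cosimplicial structure on $J^{\bullet}$, we may define
\[
f([\varphi,q]) := \mathfrak{C}(\varphi)(q) \in \mathfrak{C}(S)(x,y)_k.
\]
This is compatible with the identifications in the coend defining $|\text{Hom}^R_S(x,y)|_{Q^{\bullet}}$ because $\mathfrak{C}$ is a functor and $\mathfrak{C}$ of the cosimplicial structure maps $J^m \to J^n$ restricts precisely to the cosimplicial structure maps $Q^m \to Q^n$. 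Naturality in $S$ and in the pair $(x,y)$ is then immediate.

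Next I would verify that both the source and target behave well with respect to colimits of simplicial sets. The right mapping space $\text{Hom}^R_S(x,y)$ is a limit in $S$ but its realization is a coend, so $| - |_{Q^{\bullet}}$ takes colimits in the $\varphi$-variable to colimits. On the other side, $\mathfrak{C}$ is a left adjoint and hence $\mathfrak{C}(S)(x,y)$ is the colimit of $\mathfrak{C}(\Delta^n)(i,j)$ over the category of simplices of $S$ sending first vertex to $x$ and last vertex to $y$. Using Dugger–Spivak's description via necklaces, one can rewrite both sides as colimits indexed by combinatorial categories attached to $S$.

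The core step is to establish the weak equivalence on generators, i.e.\ to prove $f$ is a Kan weak equivalence when $S = \Delta^m$ and $(x,y)=(0,m)$. Here $\text{Hom}^R_{\Delta^m}(0,m)$ admits an explicit description (its $n$-simplices are maps $J^n \to \Delta^m$ preserving endpoints), and $\mathfrak{C}(\Delta^m)(0,m)$ is the nerve of the poset $P_{0,m} \cong \mathbf{1}^{m-1}$. One identifies $|\text{Hom}^R_{\Delta^m}(0,m)|_{Q^{\bullet}}$ as a homotopy colimit over this poset with contractible fibers, and checks directly that $f$ induces the canonical comparison to $N(\mathbf{1}^{m-1})$. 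Alternatively, one may filter the source by the dimension of the representing $\varphi$ and produce a zigzag of deformation retractions onto the image.

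Finally, the plan is to promote the standard-simplex case to the general quasi-category case. The hypothesis that $S$ is a quasi-category is used here in an essential way: by Proposition \ref{Kan} and the Joyal–Bergner Quillen equivalence recalled just before Proposition \ref{7.6}, $\mathfrak{C}(S)(x,y)$ is a Kan complex, and on the other side the quasi-category hypothesis guarantees that $\text{Hom}^R_S(x,y)$ is a Kan complex as well (via Joyal's inner-anodyne lifting for $J^n$). With both sides Kan and $f$ natural, one can compare homotopy groups by representing classes by maps from $J^n$ and applying the standard-simplex computation at each stage. The main obstacle is ensuring the comparison genuinely reduces to standard simplices; the quasi-category hypothesis is what converts the abstract universal property of $\mathfrak{C}(S)(x,y)$ as a colimit into something with the correct homotopy type, and checking this carefully — essentially the content of Lurie's Proposition 2.2.2.7 — is where the real work lies.
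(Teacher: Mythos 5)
First, a point of comparison: the paper does not actually prove this proposition --- it is recalled verbatim as Proposition 2.2.4.1 of Lurie's \emph{Higher Topos Theory}, with no argument supplied. So any proof you write is by definition a departure from the paper. Your construction of the map, $f([\varphi,q]) := \mathfrak{C}(\varphi)(q)$, is the correct one, and your observation that compatibility with the coend identifications follows from functoriality of $\mathfrak{C}$ applied to the cosimplicial structure of $J^{\bullet}$ is fine.

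The rest of the plan has a genuine gap at exactly the point you flag as ``where the real work lies,'' and that work is not deferred to a citable lemma --- it \emph{is} the theorem. For $S=\Delta^m$ with $(x,y)=(0,m)$ the statement is essentially vacuous: a map $J^n\to\Delta^m$ preserving endpoints is unique, so $\text{Hom}^R_{\Delta^m}(0,m)\cong\Delta^0$, the source is $Q^0=\Delta^0$, and the target $N(\mathbf{1}^{m-1})$ is contractible; your ``core step'' therefore carries no weight. The globalization then fails as described, because both sides are ordinary colimits (not homotopy colimits) over the simplices of $S$, and colimits of simplicial sets do not preserve weak equivalences --- indeed $\mathfrak{C}(S)(x,y)$ is a colimit of contractible cubes $N(\mathbf{1}^{j-i-1})$ and yet has highly nontrivial homotopy type. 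This is precisely why Lurie's proof of 2.2.4.1 requires the long analysis of $\mathfrak{C}$ on inner anodyne maps occupying Section 2.2 of \cite{Lur09}, and why \cite{DS211} devote a separate paper to an alternative argument via necklaces. Your appeal to Proposition 2.2.2.7 is misplaced: that result asserts $|S|_{Q^{\bullet}}\simeq S$ and says nothing about the comparison with $\mathfrak{C}(S)(x,y)$. Finally, the claim that $\mathfrak{C}(S)(x,y)$ is a Kan complex ``by Proposition \ref{Kan} and the Quillen equivalence'' is unjustified: $\mathfrak{C}$ is a \emph{left} Quillen functor and does not preserve fibrant objects, and the fibrancy of $\mathfrak{C}(S)$ for a quasi-category $S$ is itself a nontrivial theorem normally deduced from the very proposition you are trying to prove. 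As written, the proposal is a correct setup plus an honest acknowledgment of the hard step, not a proof.
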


In Proposition 2.2.2.7 of \cite{Lur09}, Lurie shows there is a Kan weak equivalence of simplicial sets
\begin{eqnarray*}
g: |S| _{Q^{\bullet}} \cong \underset{ \Delta^n \to S }{\text{colim}} \mathfrak{C}(J^n)(0,n+1) \to \underset{ \Delta^n \to S }{\text{colim}} \Delta^n \cong S
\end{eqnarray*}
for any simplicial set $S$. Hence, for a quasi-category $S$ and $x,y \in S_0$ we have a zig zag of Kan weak equivalences 
$$\text{Hom}^R_S(x,y) \xleftarrow{g} |\text{Hom}^R_S(x,y)|_{Q^{\bullet} } \xrightarrow{f} \mathfrak{C}(S)(x,y).$$
Now consider the above zig zag of Kan weak equivalences in the case $S= \text{Sing}(X)$ for a topological space $X$. There is a Kan weak equivalence of simplicial sets 
\begin{eqnarray*}
\theta: Hom^R_{\text{Sing}(X)}(x,y) \to \text{Sing}(P^M_{x,y}X)
\end{eqnarray*}
given as follows. A simplex  $\varphi: J^n \to \text{Sing}(X) \in Hom^R_{\text{Sing}(X)}(x,y)$ corresponds to a continuous map $\sigma_{\varphi}: |\Delta^{n+1}| \to X$ which collapses the last face of $|\Delta^{n+1}|$ to $x$ and sends the last vertex of $|\Delta^{n+1}|$ to $y$. For each point $p$ in the last face of $|\Delta^{n+1}|$ there is a straight line segment from $p$ to the last vertex of $|\Delta^{n+1}|$. These straight line segments give a family of disjoint paths inside $|\Delta^{n+1}|$ which start in the last face and end in the last vertex and such a family is parametrized by $|\Delta^n|$. The continuous map $\sigma_{\varphi}$ induces a continuous map $|\Delta^n| \to P^M_{x,y}X$ which corresponds to a simplex $\theta(\varphi): \Delta^n \to \text{Sing}(P^M_{x,y}X)$. The map $\theta$ is clearly a Kan weak equivalence of simplicial sets. It follows from the above zig zag formed by Kan weak equivalences $f$ and $g$  that $\mathfrak{C}(\text{Sing}(X))(x,y) \simeq \text{Sing}(P^M_{x,y}X)$. 
\\

\section{Algebraic models for loop spaces}

In this section we deduce an extension of a classical theorem of Adams from our previous results and discuss a few consequences. We start by showing that for a path connected pointed space $(X,b)$,  $\Lambda(\text{Sing}(X,b))(b,b)$  and $S_*(\Omega^M_bX;k)$ are weakly equivalent as dga's.

\begin{proposition}\label{7.8}
 Let $(X,b)$ be a pointed path connected topological space. The differential graded associative algebras $\Lambda(\text{Sing}(X,b))(b,b)$ and  $S_*(\Omega^M_bX;k)$ are weakly equivalent. 
\end{proposition}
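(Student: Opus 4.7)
The plan is to concatenate the main results already established in Sections 7 and 8 into a single zig-zag of dga weak equivalences. First I would specialize Theorem \ref{7.1} to the $0$-reduced simplicial set $S = \text{Sing}(X,b)$, whose unique vertex is $b$; this gives an isomorphism of dga's
\begin{equation*}
\Lambda(\text{Sing}(X,b))(b,b) \cong \Omega Q_{\Delta}(\text{Sing}(X,b)).
\end{equation*}

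Next, Proposition \ref{7.3} supplies a natural zig-zag of dga quasi-isomorphisms
\begin{equation*}
\Omega Q_{\Delta}(\text{Sing}(X,b)) \simeq \Gamma(\text{Sing}(X,b))(b,b) = Q_{\Delta}(\mathfrak{C}(\text{Sing}(X,b))(b,b)),
\end{equation*}
where the right-hand side carries the dga structure induced by the simplicial monoid structure on $\mathfrak{C}(\text{Sing}(X,b))(b,b)$ via the Eilenberg-Zilber shuffle map.

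The final step is to replace $\mathfrak{C}(\text{Sing}(X,b))(b,b)$ by $\text{Sing}(\Omega^M_b X)$. For this I invoke Corollary \ref{7.7}, which provides a weak equivalence between the one-object simplicial categories $\mathfrak{C}(\text{Sing}(X,b))$ and $\text{Sing}(\Omega X)$. On endomorphisms at $b$ this is precisely a Kan weak equivalence of simplicial monoids $\mathfrak{C}(\text{Sing}(X,b))(b,b) \simeq \text{Sing}(\Omega^M_b X)$. Applying the lax monoidal functor $Q_{\Delta}$, which sends simplicial monoids to dga's and turns simplicial monoid weak equivalences into dga quasi-isomorphisms (because the Eilenberg-Zilber map is a natural quasi-isomorphism compatible with multiplication), yields
\begin{equation*}
Q_{\Delta}(\mathfrak{C}(\text{Sing}(X,b))(b,b)) \simeq Q_{\Delta}(\text{Sing}(\Omega^M_b X)) = S_*(\Omega^M_b X; k)
\end{equation*}
as dga's. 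Stringing the three zig-zags together gives the proposition.

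The principal item to verify carefully is that the equivalence of one-object simplicial categories in Corollary \ref{7.7} is genuinely an equivalence of simplicial \emph{monoids}, so that $Q_\Delta$ returns a dga-level zig-zag rather than merely a chain-level one. This is essentially automatic, since a weak equivalence in the Bergner model structure between two simplicial categories each with a single object is nothing other than a Kan weak equivalence on the unique mapping space that is compatible with the strictly associative composition, i.e.\ a weak equivalence of simplicial monoids. The only remaining bookkeeping is to track that each of the dga weak equivalences in the chain is indeed multiplicative, but this is built into each of Theorem \ref{7.1}, Proposition \ref{7.3}, and the monoidality of $Q_\Delta$.
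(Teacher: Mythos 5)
Your argument is correct and amounts to the same proof as the paper's: the paper goes $\Lambda(\mathrm{Sing}(X,b))(b,b)=Q_{\square_c}(\mathfrak{C}_{\square_c}(\cdots))\simeq Q_{\Delta}(|\mathfrak{C}_{\square_c}(\cdots)|)\cong Q_{\Delta}(\mathfrak{C}(\mathrm{Sing}(X,b))(b,b))$ via Lemma \ref{7.2} and Proposition 5.3 and then applies Corollary \ref{7.7}, while you reach the same middle term by composing Theorem \ref{7.1} with Proposition \ref{7.3} (whose proof is exactly that chain), so the detour through $\Omega Q_{\Delta}(\mathrm{Sing}(X,b))$ is a harmless round trip through the isomorphism of Theorem \ref{7.1}. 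Your closing remark that the equivalence of one-object simplicial categories in Corollary \ref{7.7} restricts to a zig-zag of simplicial monoid equivalences, so that $Q_{\Delta}$ (lax monoidal via Eilenberg--Zilber) yields dga quasi-isomorphisms, is precisely the point the paper leaves implicit in its final step.
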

\begin{proof} By definition $\Lambda(\text{Sing}(X,b))(b,b)= Q_{\square_c}(\mathfrak{C}_{\square_c}(\text{\text{Sing}}(X,b))(b,b))$. By Lemma \ref{7.2} we have a quasi-isomorphism of chain complexes
\begin{eqnarray*}
Q_{\square_c}(\mathfrak{C}_{\square_c}(\text{\text{Sing}}(X,b))(b,b)) \simeq Q_{\Delta}(|\mathfrak{C}_{\square_c}(\text{\text{Sing}}(X,b))(b,b)| ).
\end{eqnarray*} Moreover, this is quasi-isomorphism is a weak equivalence of dga's since the monoidal structures are preserved under the triangulation functor. By Proposition 5.3, we have an isomorphism
 \begin{eqnarray*}
 Q_{\Delta}(|\mathfrak{C}_{\square_c}(\text{\text{Sing}}(X,b))(b,b)|) \cong Q_{\Delta}(\mathfrak{C}(\text{\text{Sing}}(X,b))(b,b)).
 \end{eqnarray*}
  Finally, by Corollary \ref{7.7},  we have
  \begin{eqnarray*}
  Q_{\Delta}(\mathfrak{C}(\text{\text{Sing}}(X,b))(b,b)) \simeq S_*(\Omega_b^MX;k)
  \end{eqnarray*}
  as dga's.
  \end{proof}

In \cite{Ada52}, Adams introduced the cobar construction and constructed a chain map of dga's $\varphi: \Omega Q_{\Delta}( \text{Sing}(X,b) ) \to C^{\square}_*(\Omega^M_bX;k)$, where $C^{\square}_*(\Omega^M_bX;k)$ denotes the normalized singular cubical chains on $\Omega^M_bX$. Moreover, Adams showed that if $X$ is simply connected then $\varphi$ is a quasi-isomorphism. The proof of this fact relied on associating a spectral sequence to  $\Omega Q_{\Delta}( \text{Sing}(X,b) )$ and then comparing it to the Serre spectral sequence for the fibration $\Omega^M_bX \to PX \to X$. The simple connectivity assumption was used in order for the hypotheses of the Zeeman comparison theorem for spectral sequences to be satisfied. 
\\

We now deduce an extension of Adams' classical theorem (Corollary \ref{7.9} below) to the case when $X$ is a path connected space with possibly non-trivial fundamental group. Note that we have not relied on spectral sequence arguments but rather on  categorical and space level arguments as discussed in the previous section.
 
\begin{corollary} \label{7.9}
 For any pointed path connected space $(X,b)$, the differential graded algebras $\Omega(Q_{\Delta}(\text{Sing}(X,b)))$ and $S_*(\Omega^M_bX;k)$ are weakly equivalent.
\end{corollary}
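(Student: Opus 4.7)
The plan is to obtain Corollary 7.9 by directly chaining the isomorphism in Theorem \ref{7.1} with the weak equivalence in Proposition \ref{7.8}. The simplicial set $\text{Sing}(X,b)$ is $0$-reduced by construction, since its only vertex is the constant simplex at $b$, so both results apply with $S = \text{Sing}(X,b)$ and $x = b$.

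First, I would apply Theorem \ref{7.1} to the $0$-reduced simplicial set $\text{Sing}(X,b)$. This gives a canonical isomorphism of differential graded algebras
\begin{equation*}
\Lambda(\text{Sing}(X,b))(b,b) \;\cong\; \Omega Q_{\Delta}(\text{Sing}(X,b)).
\end{equation*}
Recall that $\Lambda$ is defined as $\mathfrak{Q}_{\square_c} \circ \mathfrak{C}_{\square_c}$, so the left-hand side is literally the normalized cubical chains dga on the mapping cubical set $\mathfrak{C}_{\square_c}(\text{Sing}(X,b))(b,b)$.

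Next, I would invoke Proposition \ref{7.8}, which says that $\Lambda(\text{Sing}(X,b))(b,b)$ and $S_*(\Omega^M_b X; k)$ are weakly equivalent as dga's. Splicing together the isomorphism from Theorem \ref{7.1} and the zig-zag of dga weak equivalences from Proposition \ref{7.8} yields the desired weak equivalence
\begin{equation*}
\Omega Q_{\Delta}(\text{Sing}(X,b)) \;\cong\; \Lambda(\text{Sing}(X,b))(b,b) \;\simeq\; S_*(\Omega^M_b X; k).
\end{equation*}

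There is no real obstacle at this stage: all the substantive work has been pushed into the earlier statements. The combinatorial content lives in Theorem \ref{7.1}, which translates the cubical mapping space at $b$ into Adams' cobar in a strict sense, and the homotopy-theoretic content lives in Proposition \ref{7.8}, which uses Lemma \ref{7.2} (acyclic models), Proposition 5.3 (the factorization $\mathfrak{C} \cong \mathfrak{T} \circ \mathfrak{C}_{\square_c}$), and Corollary \ref{7.7} (the weak equivalence $\mathfrak{C}(\text{Sing}(X,b))(b,b) \simeq \text{Sing}(\Omega^M_b X)$ coming from Lurie's comparison in Section 2.2 of \cite{Lur09}). Thus the proof amounts to a single sentence assembling these ingredients, together with an explicit remark that the weak equivalence is realized by a concrete zig-zag of dga quasi-isomorphisms passing through $Q_{\square_c}(\mathfrak{C}_{\square_c}(\text{Sing}(X,b))(b,b))$ and $Q_{\Delta}(\mathfrak{C}(\text{Sing}(X,b))(b,b))$.
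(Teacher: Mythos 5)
Your proposal is correct and is exactly the paper's argument: the paper's proof of Corollary \ref{7.9} likewise consists of combining the isomorphism of Theorem \ref{7.1} applied to $\text{Sing}(X,b)$ with the dga weak equivalence of Proposition \ref{7.8}. Your additional remarks about where the substantive work lives simply unpack the same chain of intermediate objects the paper uses.
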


\begin{proof}
This follows directly from Theorem \ref{7.1} and Proposition \ref{7.8}. 
\end{proof}

We conclude with two remarks and an application to model the free loop space.

 \begin{remark} \label{7.10}
 It follows from the above discussion that we may recover the homology of the based loop space of $|S|$ by taking the cobar construction on any connected dg coalgebra $\Omega$-quasi-isomorphic to $Q_{\Delta}^K(S)$. In general, $Q_{\Delta}(S)$ and $Q_{\Delta}^K(S)$ are quasi-isomorphic but not necessarily $\Omega$-quasi-isomorphic. However, if $S_0=\{x\}$ and $S_1=\{ s_0(x) \}$, where $s_0(x)$ denotes the degenerate $1$-simplex at $x$, then $Q_{\Delta}(S)$ and $Q_{\Delta}^K(S)$ are simply connected dg coalgebras and the natural map of dg coalgebras $\iota: Q_{\Delta}(S) \to Q_{\Delta}^K(S)$ is a quasi-isomorphism. Thus, by Propoisition 2.2.7 in \cite{LoVa12}, $\iota$ is an $\Omega$-quasi-isomorphism. Consequently, $\Omega Q_{\Delta}(S)$ is weakly equivalent as a dg algebra (i.e. quasi-isomorphic) to $S_*(\Omega^M_x|S|; k)$. 
  \end{remark}
 
 \begin{remark} In the case of a simplicial complex, an explicit and smaller model for the based loop space can be given using a Kan fibrant replacement functor. Let $K$ be a simplicial complex with an ordering of its vertices and let $v$ be a vertex of $K$. Let $fK$ be the simplicial set obtained by defining the face maps in accordance with the ordering of the vertices and adding degeneracies freely to $K$. The cobar construction on $Q_{\Delta}(fK)$ might not yield the homology of the based loop space of $|fK|$. However, we may consider the Kan fibrant replacement  $\text{Ex}^{\infty}(fK)$ of $fK$. $\text{Ex}^{\infty}(fK)$  is a Kan complex weakly equivalent to $fK$, so it follows that the Kan complexes $\text{Ex}^{\infty}(fK)$ and $\text{Sing}(|fK|)$ are weakly equivalent. Thus $\mathfrak{C}(\text{Ex}^{\infty}(fK))$, $\mathfrak{C}(\text{Sing}(|fK|))$, and $\text{Sing}(\mathcal{P}|fK|)$ are weakly equivalent simplicial categories. Therefore $\Lambda(\text{Ex}^{\infty}(fK))(v,v)$ is a dga model for the based loop space of $|fK|$ at $v$. This remark explains an example of Kontsevich outlined in \cite{Kon09}. In \cite{HT10}, a similar construction was also described for any simplicial set, which was then compared to Kan's loop group construction.
 \end{remark}
 Finally, a chain complex model for the free loop space of a path connected topological space may be obtained as follows. For any dga $A$ denote by $CH_*(A)$ the Hochschild chain complex of $A$. For the definition we refer the reader to any standard reference such as  \cite{Lod98}. 
 
 \begin{corollary} \label{7.12}
 For any pointed path connected space $(X,b)$, the Hochschild chain complex $CH_*(\Omega(Q_{\Delta}(\text{Sing}(X,b))))$ is quasi-isomorphic to $S_*(LX;k)$, the singular chains on the free loop space of $X$.
\end{corollary}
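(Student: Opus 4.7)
The plan is to combine Corollary \ref{7.9} with the classical Burghelea--Fiedorowicz--Goodwillie theorem identifying the Hochschild complex of chains on a based loop monoid with chains on the corresponding free loop space. Concretely, I would exhibit a zig-zag
\begin{eqnarray*}
CH_*(\Omega Q_{\Delta}(\text{Sing}(X,b))) \simeq CH_*(S_*(\Omega^M_b X; k)) \simeq S_*(L X; k),
\end{eqnarray*}
where the first equivalence is obtained by applying $CH_*$ to the dga weak equivalence of Corollary \ref{7.9}, and the second is the classical identification for topological monoids.

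For the first equivalence, one must check that $CH_*$ sends the zig-zag of dga quasi-isomorphisms provided by Corollary \ref{7.9} to a quasi-isomorphism of chain complexes. Both $\Omega Q_{\Delta}(\text{Sing}(X,b))$ and $S_*(\Omega^M_b X;k)$ are degree-wise free $k$-modules, so the bar filtration of the cyclic bar complex yields a spectral sequence whose $E^1$-page depends only on the underlying homology algebra. This gives invariance of $CH_*$ under quasi-isomorphism of $k$-flat dga's; alternatively, one may pass through a cofibrant replacement in a model structure on dga's. For the second equivalence, one uses that for any group-like topological monoid $M$ the cyclic bar construction $B^{cyc}(M)$ models the free loop space $L(BM)$. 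Applied to $M=\Omega^M_b X$, together with the homotopy equivalence $B(\Omega^M_b X)\simeq X$ for path-connected $X$, this yields $S_*(LX;k) \simeq S_*(B^{cyc}(\Omega^M_b X);k)$, and an Eilenberg--Zilber shuffle argument identifies the latter with $CH_*(S_*(\Omega^M_b X;k))$.

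The main obstacle is the quasi-isomorphism invariance of $CH_*$: this fails in general without $k$-flatness, and even with flatness a careful spectral sequence or cofibrancy argument is needed. A secondary point is that the Burghelea--Fiedorowicz--Goodwillie theorem is often stated under a simple connectivity hypothesis; here one must observe that $\Omega^M_b X$ is group-like for any path-connected $X$, so that the cyclic bar construction computes $L(BM)$ without extra connectivity assumptions and the entire argument goes through for arbitrary path-connected $X$. Once these technical points are handled, composing the two weak equivalences gives the stated quasi-isomorphism.
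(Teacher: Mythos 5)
Your proposal is correct and takes essentially the same route as the paper's proof, which simply cites Corollary \ref{7.9}, the invariance of Hochschild chains under weak equivalences of dga's, and the Goodwillie theorem identifying $CH_*(S_*(\Omega^M_b X;k))$ with $S_*(LX;k)$. The additional detail you supply on the two technical points (the tensor-length filtration argument for quasi-isomorphism invariance over degreewise free $k$-modules, and the group-like hypothesis replacing simple connectivity in the Burghelea--Fiedorowicz--Goodwillie identification) is a correct elaboration of what the paper leaves implicit.
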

\begin{proof}
This is a direct consequence of the fact that the Hochschild chain complex of the dga $S_*(\Omega^MX;k)$ is quasi-isomorphic to $S_*(LX;k)$ (a theorem usually attributed to Goodwillie \cite{Goo85}), Corollary \ref{7.9}, and the invariance of Hochschild chains under weak equivalences of dga's.
\end{proof}
As explained in remark 2.23 of \cite{Hes16}, for any connected dg coalgebra $C$ there is a quasi-isomorphism of chain complexes 
$$coCH_*(C) \simeq CH_*(\Omega C)$$
where $coCH_*(C)$ denotes the coHochschild chain complex of $C$; we refer to \cite{Hes16} for definitions and further details. As a consequence, we obtain a model for the free loop space $LX$ of a path connected space space $X$ that does not require passing to the based loop space, which we expect to be convenient in studying string topology.
\begin{corollary} 
For any pointed path connected space $(X,b)$, the coHochschild complex $coCH_*( Q_{\Delta}(\text{Sing}(X,b)))$ is quasi-isomorphic to $S_*(LX;k)$. 
\end{corollary}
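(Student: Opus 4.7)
The plan is to chain together two already-established quasi-isomorphisms. First I would verify that the input is valid for the quasi-isomorphism $coCH_*(C) \simeq CH_*(\Omega C)$ cited from \cite{Hes16}: since $\mathrm{Sing}(X,b)$ is $0$-reduced by construction (all its vertices are collapsed to $b$), the dg coalgebra $C := Q_{\Delta}(\mathrm{Sing}(X,b))$ is connected and free as a graded $k$-module in each degree, so the coHochschild/Hochschild comparison applies.

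Next, I would apply this comparison to obtain a quasi-isomorphism of chain complexes
\begin{equation*}
coCH_*\bigl(Q_{\Delta}(\mathrm{Sing}(X,b))\bigr) \;\simeq\; CH_*\bigl(\Omega Q_{\Delta}(\mathrm{Sing}(X,b))\bigr).
\end{equation*}
Then, invoking Corollary \ref{7.12}, the right-hand side is quasi-isomorphic to $S_*(LX;k)$. Composing the two zig-zags of quasi-isomorphisms yields the desired statement.

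The only step requiring any care is checking that the hypotheses of the coHochschild/Hochschild comparison of \cite{Hes16} are indeed satisfied in our setting, but this reduces to connectivity and $k$-freeness of $Q_{\Delta}(\mathrm{Sing}(X,b))$, both of which are immediate from the fact that $\mathrm{Sing}(X,b)$ has exactly one vertex. I do not anticipate any serious obstacle, as this corollary is essentially a direct combination of Corollary \ref{7.9}, Goodwillie's theorem (used in Corollary \ref{7.12}), and the coHochschild--cobar comparison of Hess. The payoff is conceptual rather than technical: it gives a model for $S_*(LX;k)$ built directly from simplicial chains on $\mathrm{Sing}(X,b)$, bypassing the cobar construction entirely.
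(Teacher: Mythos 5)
Your proposal is correct and matches the paper's own proof: both combine the coHochschild--Hochschild comparison $coCH_*(C)\simeq CH_*(\Omega C)$ from Hess with Corollary \ref{7.12} (the paper's citation of ``Corollary 7.2'' there is evidently a typo for \ref{7.12}). Your extra verification of connectedness and $k$-freeness of $Q_{\Delta}(\mathrm{Sing}(X,b))$ is a harmless and reasonable addition.
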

\begin{proof} This follows directly from Corollary \ref{7.2} and the fact that $coCH_*(C) \simeq CH_*(\Omega C)$ for any connected dg coalgebra $C$. 
\end{proof}.

\bibliographystyle{plain}

 \Addresses

\end{document}